\def\Lat{\mathfrak{L}}
\def\Y{{\mathcal{X}}}
\def\V{{\mathscr{V}}}
\def\F{\mathsf{F}}
\def\T{{\mathcal{T}}}
\def\Vh{\boldsymbol{V}_{\varepsilon, Q}}
\def\Wh{\boldsymbol{V}_{\varepsilon, T}}
\def\Vac{\boldsymbol{V}_{h, ac}}
\def\<{{\langle }}
\def\>{{\rangle }}
\def\be{\beta}
\def\R+{{\Bbb R}_*^+}
\newcommand{\DD}{\overline{D}}
\newcommand{\J}[1]{{  [\! [#1]\! ]  }}   
\newcommand{\Av}[1]{{  \{\!\!\{#1\}\!\!\}  }}
\newcommand{\ppc}[1]{\mbox{\kern-3pt\b{\kern3pt$#1$}}}
\newcommand{\dd}{\,{\rm d}}
\newtheorem{lemma}{Lemma}[section]
\newtheorem{proposition}{Proposition}[section]
\newtheorem{remark}[lemma]{Remark}
\renewcommand{\be}{\begin{equation}}
\newcommand{\ee}{\end{equation}}
\newcommand{\ph}{\phi_\eta }
\newcommand{\Om}{\Omega}
\newcommand{\ZZ}{\mathbb{Z}}
\newcommand{\RR}{\mathbb{R}}
\newcommand{\GG}{\Gamma}
\def\XXint#1#2#3{{\setbox0=\hbox{$#1{#2#3}{\int}$}
     \vcenter{\hbox{$#2#3$}}\kern-.5\wd0}}
\numberwithin{equation}{section}
\begin{document}
\title{ On  atomistic-to-continuum couplings without ghost  forces in three dimensions}
%\subtitle{Do you have a subtitle?\\ If so, write it here}
\author{Charalambos Makridakis, Dimitrios Mitsoudis and Phoebus Rosakis % etc
}                     % Do not remove
%
%\date{\today, Working Draft.}
% The correct dates will be entered by the editor
%
\maketitle
\begin{abstract}
In this paper we  construct  energy based  numerical methods free of ghost forces in \emph{three dimensional lattices} arising in crystalline materials. The analysis hinges  on establishing a connection 
of the coupled system to conforming finite elements.  Key ingredients are: (i) a new representation of
discrete derivatives related  to  long range interactions of atoms as  volume integrals of
gradients of piecewise linear functions over \emph{bond volumes},  and (ii)  the construction of an underlying 
globally continuous function representing the coupled modeling method.  
%As a consequence we provide a %n alternative
% finite element proof of the second-order consistency result of the continuum Cauchy--Born model.% derived by E \& Ming using finite difference techniques.
%
%As a consequence we provide an alternative finite element proof of the second-order consistency result of the continuum Cauchy--Born model derived by E \& Ming using finite difference techniques.
\end{abstract}

%\keywords{Cauchy--Born approximations, atomistic models, multiscale modelling}

%\subjclass{65N15}

\section{Introduction}
\label{intro}
In recent years substantial progress has been made  in multiscale modeling   of materials, see e.g.,  \cite{BLeBL07,KaxL05}. A class of important  problems concerns  atomistic-to-continuum coupling in crystals, e.g., the quasicontinuum method  \cite{SMT99} and its variants. Since the continuum model fails to provide an accurate prediction in the vicinity of defects and other singularities,  coupled 
atomistic/continuum methods have become popular as an adaptive modeling approach over the 
last years, see e.g. references in   \cite{MS11, Shap10,Shap11_3D}. The main issue that arises in these methods is the proper matching of information across scales. In the first attempts in this direction,  ad hoc coupling of atomistic and continuum energies resulted in numerical artifacts at the interface of atomistic and continuum 
regions, known as \emph{ghost forces,} e.g., \cite{DL09ii}.
Therefore,  the construction of \emph{consistent} A/C couplings (that are free of ghost forces) is crucial   in the numerical modeling of crystalline materials. Further, since this problem is one of the 
better identified mathematical problems related to matching of information across scales in materials, it might provide useful insight into the study of  multi-scale
computational methods of a more general nature.   

This paper is devoted to the construction of energy based methods free of ghost forces in \emph{three dimensional crystal
lattices.} The problem of constructing consistent energies in two dimensional lattices 
was resolved recently by Shapeev \cite{Shap10}, see also \cite{LiLuskin2012}. A key idea in \cite{Shap10} is to express differences (discrete derivatives) related to long range interactions of atoms as  appropriate line integrals over
bonds. In two space dimensions it is then possible to transform the assembly of line
integrals over all possible interactions into an area integral,  through a counting argument
known as  the \emph{bond density lemma}, \cite{Shap10}. This lemma fails to hold in three space dimensions,
 thus the construction of energy based consistent couplings based on this approach 
does not seem to be readily extendable to this case; see  \cite{Shap11_3D} where an interesting attempt to circumvent this problem is made.  {Other papers dealing with  similar problems include, e.g., 
 \cite{Belytschko2002, Shimokawa2004, Eetall2006, LinSh10, OrtZhang2011}.
}

Our work adopts a different approach, based on control volumes associated with bonds, 
which we call \emph{bond volumes,} and  on the construction of an underlying globally continuous function representing the coupled modeling method. 
The three dimensional coupled energies constructed in this way are free of ghost forces. Moreover, they can be combined in a consistent way to high-order finite element discretizations  of the continuum region. 

\newpage
The paper is organized as follows. In Section 1 we introduce  necessary notation. In  Section 2 we introduce suitable finite element spaces and  atomistic Cauchy-Born models
which are used in the construction of the coupled methods. 
In Section 3 we state and prove a key result, Lemma 3.1, which establishes a connection between long range differences and volume integrals of piecewise linear functions defined over appropriate decompositions of bond volumes   into tetrahedra. In Section 4 we present a conforming coupling  method based on bond volumes. 
We note that in the continuum region we use the atomistic Cauchy-Born models,  introduced in \cite{MS11}.  In Section 5
%
%\newpage
%\noindent
we show that it is possible 
 to introduce discontinuities at the interface, thus allowing greater 
flexibility in the design of underlying meshes, while  still 
obtaining a consistent ghost-force-free method. The analysis in this section may 
lead to the design of more general atomistic/continuum coupled methods based on 
discontinuous finite elements. Finally, in Section 6 we show that one can use finite elements
of high order to discretize  the continuum region. All methods  presented here are free of ghost forces;
they provide a framework that facilitates  the design of several  alternative formulations.

\subsection{Notation}
\emph{Lattice, discrete domain, continuum domain. \/} {We let $e_i$ be the standard basis vectors for $\RR^3$, and choose $\mathbb{Z}^3$ as the three-dimensional lattice. The extension  to  lattices generated by any three linearly independent vectors  of $ \mathbb{R}^3 $  is straightforward since it merely  involves   compositions
 with a fixed
affine map.  The \emph{scaled lattice} is $\varepsilon \mathbb{Z}^3=\{x_ {\boldsymbol{\ell} }= (x_{\ell _1},x_{\ell _2}, x_{\ell _3})= \varepsilon\, \ell  ,\,\ell\in \mathbb{Z}^3 \}$, wth  lattice distance $\varepsilon=1/k$,  $k\in\mathbb{Z}_+$.
We will consider discrete periodic functions on $\mathbb{Z}^3$ defined over a `periodic domain' $\Lat$. 
More precisely, let  $M_i\in \mathbb{Z}_+$, $i=1,2,3$ and define 
\begin{equation*}
\begin{split}
&  \Omega:= [-M_1+1, M_1]\times [-M_2+1, M_2] \times [-M_3+1, M_3] .\\
&\Omega_{\text{discr}} :=\varepsilon \mathbb{Z}^3\cap \Omega, \quad \Lat := \mathbb{Z}^3\cap\frac{1}{\varepsilon} \Omega.
\end{split}
\end{equation*}
Here $\Omega$ is the  \emph{continuum domain};  the actual configuration of the atoms is $\Omega_{\text{discr}}$, the set of atoms of the scaled lattice contained in $\Omega$.  In particular, the convex hull of   $\Omega_{\text{discr}}$ is $\Omega$.
Also $\Lat$ is the basic lattice period in the unscaled lattice $\ZZ^3$. }

\noindent
\emph{Functions and spaces.\/} The atomistic deformations are denoted 
\begin{equation*}
\begin{split}
&y_\ell= y( x_ \ell)  \, , \; x_\ell=\varepsilon\ell, \quad \ell \in \Lat \quad \text{where}\\
&y_\ell= \F  x_ \ell + v_\ell, \quad \text{with $ v_\ell = v ( x_ \ell)$ periodic with respect to $\Lat$}.
\end{split}
\end{equation*}
Here $\F $ is a constant $3\times 3$ matrix with $\text{det}\,  \F  >0$.
The corresponding spaces for $y$ and $v$ are denoted by
$ \Y $ and $\V$ and are defined as follows:
\begin{equation*}
\begin{split}
& {\Y} := \{  {y} : \Lat \to \mathbb{R}^3, \quad  y_\ell= \F  x_ \ell + v_\ell,  \quad \ v \in \V,\quad \ell \in \Lat \} \, ,\\
&\V := \{ u : \Lat \to \mathbb{R}^3,    \quad \text{$ u_\ell = u ( x_ \ell)$ \quad  periodic with zero average with respect to $\Lat$} \}.
\end{split}
\end{equation*}
For functions ${y} , v\, : \Lat \to \mathbb{R}^3$ we define the inner product
$$\<\,  y, v\, \> _\varepsilon    := \varepsilon ^3 \sum _{\ell \in \Lat } \ \, \ y_\ell \, \cdot \, v_\ell.$$

\noindent
For a positive real number $s$ and $1 \leq p \leq \infty$
we denote by $W^{s,p} (\Omega , \mathbb{R}^3) $ the usual Sobolev space of functions $ y: \Omega \to \mathbb{R}^3.$
%we shall use the same symbol for discrete functions defined on the lattice and for continuum functions.
% It will be clear from
%the context which of the two is intended in a specific instance.
By $W_{\#}^{s,p} (\Omega , \mathbb{R}^3)$ we denote the corresponding
Sobolev space of periodic functions with basic period $\Omega$. By $\<\,  \cdot ,  \cdot \, \>$ we denote the standard $L^2 (\Omega)$ inner product;
 {{for a given nonlinear operator $A,$    we shall denote as well  by $\<\,  D A,  v \, \>$   the action of its 
 derivative  $D A$ as a linear operator applied to $v. $}} The space corresponding
to $\Y$ in which the minimizers of the continuum problem are sought is
\begin{equation*}
\begin{split}
&X:= \{  {y} : \Omega \to \mathbb{R}^3, \quad  y (x) = \F  x  + v (x),  \quad \ v \in V\},\qquad \mbox{where}\\
&V:= \{ u : \Omega \to \mathbb{R}^3, \quad      u\in W^{k,p} (\Omega , \mathbb{R}^3)  \cap  W_{\#}^{1,p} (\Omega , \mathbb{R}^3), \ \quad   \int _\Omega u \dd x = 0\}.
\end{split}
\end{equation*}
%
%\textbf{check the assumption $\text{det}\, y >0$}
%Henceforth, for the sake of notational simplicity, we shall suppress
%the symbol $\mathbb{R}^3$ in our notation for Sobolev spaces, and will simply write $W^{s,p} (\Omega) $ and
%$W^{s,p}_{\#} (\Omega) $.

\smallskip
\noindent
\emph{Difference quotients and derivatives.\/} The following notation
will be used throughout:
\begin{equation}
\label{DD}
\DD_{\eta} y_{\ell}:=\frac{y_{\ell+\eta} -y_{\ell} }{\varepsilon}, \qquad  \ell, \ \ell+\eta \in \Lat,
\end{equation}
denotes the difference quotient (discrete derivative) in the direction of the vector $\eta.$ Also,  %
\begin{align}
\begin{split}
%&\D_{\eta} y_{\ell}=\frac{y_{\ell+\eta} -y_{\ell} }{\varepsilon}, \qquad  \ell, \ \ell+\eta \in \Lat,\\
{\partial}_{\zeta_i} \phi ( \zeta) &:= \frac{\partial \phi  (\zeta_1, \zeta_2, \zeta _3) } {\partial  {\zeta_i}}  \, , \qquad \zeta= (\zeta_1, \zeta_2, \zeta _3),\\
{\nabla}_{\zeta} \phi ( \zeta) &:= \Big \{{\partial}_{\zeta_i} \phi ( \zeta) \Big  \} _{i },\\
{\partial}_{\alpha }v(x) &:=\frac{\partial v (x)} {\partial {x_ \alpha}}, \quad
{\nabla} u (x) :=\Big \{\frac{\partial u^i  (x) } {\partial  {x_\alpha}} \Big  \} _{i\alpha}.
\end{split}
\end{align}
%
%To avoid confusion we distinguish between derivatives with respect to arguments, denoted by $ {\partial}_{\zeta_i} $, which usually appear
%in composite functions, and derivatives with respect to the spatial variable $x_i$, denoted by the symbol $ {\partial}_{i}$.

\smallskip
\noindent
\emph{Atomistic and Cauchy--Born potential. \/}
We consider the atomistic  {energy}
\begin{equation}
\label{1.8}
{\Phi}^ a (y) := \varepsilon ^3 \sum _{\ell \in \Lat } \ \, \sum _{\eta \in R}\  \phi _\eta \, (\DD_{\eta} y_{\ell} ),
\end{equation}
where $R\subset\mathbb{Z}^3$ is a given finite  set of interaction vectors, and 
the interatomic  potential $ \phi _\eta (\cdot)$ may vary with the type of bond, i.e.,
$\phi _\eta$ may depend explicitly on  $\eta$.   Further, $\phi _\eta (\cdot)$
is assumed to be sufficiently smooth. 

For a given field of external forces $f : \Lat \to \mathbb{R}^3, $ where $ f_\ell = f( x_ \ell),$ the atomistic problem reads as follows:
\begin{equation}
\label{atomisticModel}
\begin{split}
  & \text{find a local minimizer $y^a$  in $\mathcal{X} $ of : }\\
 &\ \  {\Phi}^ a (y) - \< f , y \> _\varepsilon.
\end{split}
\end{equation}
If such a minimizer exists, then
\begin{equation*}
%\begin{split}
   \< D{\Phi}^ a (y^a), v\> _\varepsilon =   \< f , v \> _\varepsilon   \, , \qquad \text{ for all } v\in \V,
%\end{split}
\end{equation*}
where
\begin{equation}
\begin{split}\label{D-atom}
\< D{\Phi}^ a (y), v\> _\varepsilon   &:= \varepsilon ^3 \sum _{\ell \in \Lat } \ \, \sum _{\eta \in R} \ \,   \partial _{\zeta_i}   \phi _\eta \, (\DD_{\eta} y_{\ell} )  \,   \big [  \DD_{\eta} v_{\ell}   \big ]    _i\\
 &\ = \varepsilon ^3 \sum _{\ell \in \Lat } \ \, \sum _{\eta \in R}\ \nabla _{\zeta}   \phi _\eta \, (\DD_{\eta} y_{\ell} )    \, \cdot \, \DD_{\eta} v_{\ell}.
\end{split}
\end{equation}
We employ  the summation convention for repeated indices.\\

The corresponding Cauchy--Born stored energy function is  \cite{Er84,BLeBL02},
\begin{equation*}
W (\F ) =W_{CB} (\F ) :=  \, \sum _{\eta \in R}\  \phi _\eta \, (\F\, \eta ).
\end{equation*}
Then, the continuum Cauchy--Born model is stated as follows:
\begin{equation}
 \label{CBmodel}
\begin{split}
  & \text{find a local minimizer $y^{CB}$  in $X$ of : }\\
 &\ \  {\Phi}^ {CB} (y   ) - \< f , y \>,
\end{split}
\end{equation}
where the external forces $f $ are appropriately related to the discrete external forces and
\begin{equation*}
%\begin{split}
%   &
\ \  {\Phi}^ {CB} (y) : = \int_\Omega \,   W_{CB} (\nabla  y (x)  )\dd x.
%   \end{split}
\end{equation*}
 If such a minimizer exists,  (and is a   diffeomorphism on $\Om$) then
\begin{equation}
%\begin{split}
\label{CB-variational}
   \< D {\Phi}^ {CB} (y^{CB})  , v\>  =   \< f , v \>     \, , \qquad \text{ for all } v\in  V,
%\end{split}
\end{equation}
where
\begin{equation*}
\begin{split}
   \< D{\Phi}^ {CB} (y), v\>  =      \int_\Omega  S_{i\alpha} (\nabla y(x))   \, \frac{\partial v^i  (x) } {\partial  {x_\alpha}}\dd x
   =      \int_\Omega  S_{i\alpha} (\nabla y(x))   \,  \partial_\alpha  v^i  (x) \dd x \, , \quad  \,  v\in V .
\end{split}
\end{equation*}
Here the stress  tensor $S$  is defined, as usual, by
\begin{equation*}
\begin{split}
&S  := \Big \{\frac{\partial W (\F ) } {\partial  {F_{i\alpha}}} \Big  \} _{i\alpha}.
\end{split}
\end{equation*}
The stress tensor and the atomistic potential are related through:
\begin{alignat}{2}
\begin{split}
S_{i\alpha}  (\F )  &= \frac{\partial W (\F ) } {\partial  {F_{i\alpha}}}  
%= \frac{\partial  } {\partial  {F_{i\alpha}}}  \sum _{\eta \in R}\  \phi _\eta \, (\F \, \eta ) \\
%&=  \frac{\partial  } {\partial  {F_{i\alpha}}}  \sum _{\eta \in R}\  \phi _\eta \, (F_{j\beta} \, \eta_\beta ) =
%  \sum _{\eta \in R}\  \partial _{\zeta_i}  \phi _\eta \, (F_{j\beta} \, \eta_\beta ) \frac{\partial  } {\partial  {F_{i\alpha}}}  F_{j\beta} \, \eta_\beta\\
   = \sum _{\eta \in R}\  \partial _{\zeta_i} \phi _\eta \, (\F\, \eta ) \, \eta_\alpha .
\end{split}
\end{alignat}

\section{Finite element spaces and atomistic Cauchy--Born models}\label{sec2}
% For completeness,  we present the main ideas of the analysis here, but
%we focus mainly on the qualitative analytical and computational behavior of the schemes and the estimators.
% we investigate the influence of mesh change in the stability of fully discrete schemes based on Crank--Nicolson time discretizations, as well as its influence on the a posteriori error estimators.
In the sequel we introduce the finite element spaces used in the rest of the paper.
In addition we  introduce an intermediate model connecting   the continuum  and  atomistic models.  We call this the \emph{atomistic Cauchy--Born model (A-CB).}
%The final model has consistency error
%of the order $\mathcal{O}(\varepsilon ^2) $ compared to the continuum Cauchy--Born model. In the next section we show that the
%A-CB has $\mathcal{O}(\varepsilon ^2) $ consistency error compared to the original atomistic model.

\smallskip

\smallskip

\emph{Trilinear finite elements on the lattice. \/} Let $\Vh$ be the linear space of all periodic functions  that are continuous and piecewise trilinear on $\Omega$. More precisely, let
\begin{equation*}
\begin{split}
&\T_Q:= \{ K \subset \Omega \, : \quad K = (x_{\ell _1}\, , x_{\ell _1+1})\times (x_{\ell _2}\, , x_{\ell _2+1})\times (x_{\ell _3}, x_{\ell _3+1}),\quad x_ {\boldsymbol{\ell} }\, = (x_{\ell _1}\, , x_{\ell _2} ,x_{\ell _3} ) \in \Omega_{\text{discr}} \}, \\
&\Vh := \{ v  : \Omega \to \mathbb{R}^3,    \quad \text{$v\in C(\Omega)\, ,\ v | _{K} \in \mathbb{Q}_1(K)$ \ and  $v_\ell = v ( x_ \ell)$ \quad  periodic with respect to $\Lat$} \},
\end{split}
\end{equation*}
where $\mathbb{Q}_1(K) $ denotes the set of all trilinear functions on $K.$ Whenever we wish to emphasize that we work on the specific cell $K= (x_{\ell _1}\, , x_{\ell _1+1})\times (x_{\ell _2}\, , x_{\ell _2+1})\times (x_{\ell _3}, x_{\ell _3+1})$ we shall denote it by $K _\ell\, .$ 
%$ v | _{K} (x) = \alpha _0 + \alpha _1 x_1 +\alpha _2 x_2 + \alpha _3 x_1x_2.$ Then it is well known that
The elements of $\Vh $ can be expressed in terms of the nodal basis functions $\Psi _\ell =\Psi _\ell (x)$ as
\begin{equation*}
\begin{split}
 v(x)  %=&\, \sum _ {\ell \in \Lat }   v  _ \ell  \Psi _\ell (x) \\
	=&\, \sum _ {\ell \in \Lat }   v _ \ell \, \Psi _{\ell_1} (x_1)\, \Psi _{\ell_2} (x_2)\, \Psi _{\ell_3} (x_3), \quad v_\ell = v ( x_ \ell),
\end{split}
\end{equation*}
where we have used the fact that $\Psi _\ell (x)$ can be written as the tensor product of the
standard one-dimensional piecewise linear hat functions $\Psi _{\ell_i} (x_i).$    Here   $ \Psi _{\ell_i} (x_{\tilde \ell _i}) = \delta _{\ell_i \tilde \ell _i}.$

\smallskip
\noindent
For any  connected set  $\mathcal{O} $ such that  
\be\overline  {\mathcal{O}}  = \overline{ \cup _{K \in \mathcal{S} _Q} K } ,  \ee
 $\mathcal{S} _Q$ being a subset of
$\T _Q$ we denote  by $\Vh (\mathcal{O} ) $ the natural restriction of  $ \Vh $ on  $ \mathcal{O}.$

\smallskip
\noindent
 \emph{Linear finite elements on {lattice tetrahedra}.    \/} Let $\Wh $ be the space of continuous periodic functions that are piecewise linear on  
lattice tetrahedra. A crucial observation is that there are more than one 
ways to subdivide   a given lattice cell $K$  into lattice tetrahedra. Our analysis is sensitive to the choice of  such a subdivision.
At this point we assume that the lattice tetrahedra   in the following 
definition are all of the same type, i.e., they have been obtained via the same type of subdivision of 
each lattice cell. With this in mind, 
we define
\begin{equation}  \label{Vet}
\begin{split}
&\T _T= \{ T \subset \Omega \, : \quad T \text{ is a tetrahedron  whose vertices are lattice vertices of }  K_\ell  \, ,\quad x_ {\boldsymbol{\ell} }\, %= (x_{\ell _1}\, , x_{\ell _2}, x_{\ell _3}, ) 
\in \Omega_{\text{discr}} \}, \\
&\Wh  := \{ v  : \Omega \to \mathbb{R}^2,    \quad \text{$v\in C(\Omega)\, ,\ v | _{T} \in \mathbb{P}_1(T)$ \ and  $v_\ell = v ( x_ \ell)$ \quad  periodic with respect to $\Lat$} \},
\end{split}
\end{equation}
where $\mathbb{P}_1(T) $ denotes the set of affine functions on $T.$
As above, for any   connected set  $\mathcal{O} $ such that  
$\overline  {\mathcal{O}}  = \overline{ \cup _{T \in \mathcal{S} _T} T } $,
 $\mathcal{S} _T$ being a subset of
$\T _T$, we denote  by $\Wh (\mathcal{O} ) $ the natural restriction of  $ \Wh $ on  $ \mathcal{O}.$

\smallskip

\subsection{%Consistency analysis of the continuum model: 
Atomistic Cauchy--Born models on cells and  tetrahedra.}  A  decomposition of the 
cell $K_\ell $ with a vertex at $x_\ell$ into six tetrahedra is called a \emph{type A decomposition} if   the  diagonals $(x_{\ell} , x _{\ell+e_1+e_3})$
 and $(x_{\ell+e_2} , x _{\ell+e_1+e_2 +e_3})$ are edges of the resulting tetrahedra, see Fig. 1. In other words, the main diagonal $(x_{\ell} , x _{\ell+e_1+e_2+e_3})$, the three face diagonals starting at $x_{\ell} $,  the three face diagonals starting at $ x _{\ell+e_1+e_2+e_3} $, and the edges of  $K_\ell $, together  comprise the edges of the six tetrahedra.
% \bigskip 
% 
%
%%%%%%%%%%%%%%
%
\begin{figure}[h]
    \begin{center}
      \includegraphics[width=0.4\textwidth]{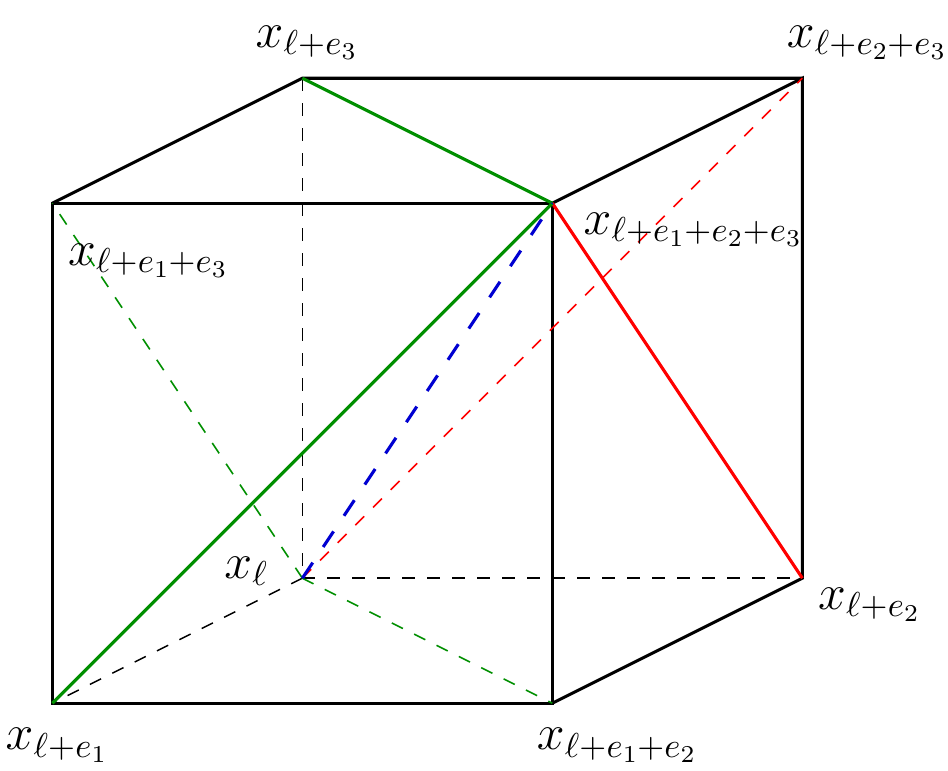}%{./cell3D_crop}
      \caption{ A \emph{type A decomposition}  of the 
cell $K_\ell $   into six tetrahedra.}
    \label{Fig:1}
    \end{center}
\end{figure}
%
%
%\noindent  
%
%\bigskip
%
%
 %(???why paragraph here?) 
 Notice that  in each tetrahedron originating  from a type A decomposition of a cell,
exactly three edges are edges of the original cell,  {these are depicted
with solid, black lines in Fig.~2}.  To define the atomistic Cauchy--Born model on tetrahedra we need to define first discrete 
 gradients at each tetrahedron $T.$ To this end, we assume that all cells are divided into tetrahedra 
 from a type A decomposition. Let $v\in \Wh$.
  {Define $\widetilde \nabla v$  as
\begin{equation}\label{DT}
 \Big \{ \widetilde {\nabla} v |_{T}  \Big \} _{i\alpha}  :=   \widetilde {{D}}_{e_\alpha} v_{\ell} ^i ,
 \end{equation}
where the discrete derivatives $ \widetilde {{D}}_{e_\alpha} v_{\ell} ^i $ on the tetrahedron 
$T$    are just the difference quotients of $v$ along the edges of $T$    with directions  $e_\alpha$. These are the edges shared with those of $K_\ell$, shown in black solid lines in Fig.~2.  For example, for the tetrahedron of Fig.~2, 
$\widetilde {{D}}_{e_3} v_{\ell} ^i =\DD_{e_3} v_{\ell} ^i$, see \eqref{DD}, whereas  
$\widetilde {{D}}_{e_2} v_{\ell} ^i =\DD_{e_2} v_{\ell+e_1+e_3} ^i$.  }
Notice that the definition of these discrete derivatives can be extended to any smooth function.
 Then for each tetrahedron $T$  it follows that 
$$\int _T W_{CB} (\nabla v) dx = \frac {\varepsilon  ^ 3} 6 W_{CB} (\widetilde \nabla v). $$
  
 %
%
%\begin{equation}
%\label{DiscrGradT}
%\begin{split}
% \overline{\DD}_{e_1} v_{\ell}   = \frac 1 4\, \Big \{ \DD_{e_1} v_{\ell}    +\DD_{e_1} v_{\ell+ e_2} +\DD_{e_1} v_{\ell+ e_3} +\DD_{e_1} v_{\ell+ e_2+e_3}       \Big \}.
%\end{split}
%\end{equation}
% 
%
 %
%
\begin{figure}
    \begin{center}
      \includegraphics[width=0.45\textwidth]{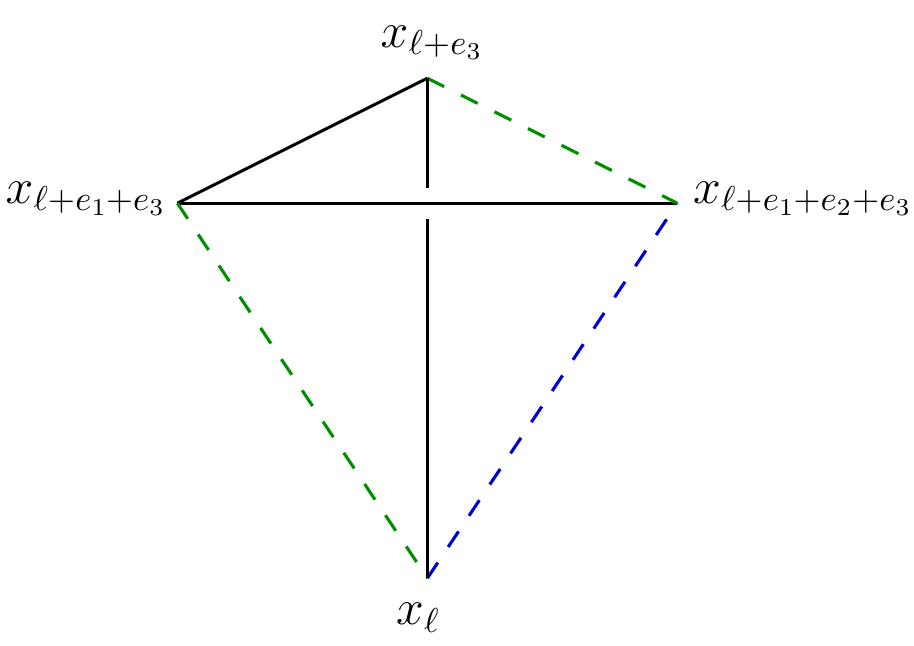}%{./cell3Dtetrahedron_crop}
       {
      \caption{A typical tetrahedron resulting from the decomposition of the cell $K_\ell$. 
      The three edges shown in solid, black lines are also edges of the original cell .}}
    \label{Fig:1d}
    \end{center}
\end{figure}

% 
%
 %
%
% Using the above definition of the  average  discrete derivatives we define the discrete gradient matrix as
%\begin{equation*}
%\begin{split}
%& \Big \{ \overline{\nabla} v_{\ell}  \Big \} _{i\alpha}  :=   \overline{\DD}_{e_\alpha} v_{\ell} ^i ,\\
%\end{split}
%\end{equation*}
%and 
Further,  let $y$
be a sufficiently smooth deformation. 
We define  corresponding the \emph{atomistic Cauchy--Born} (A--CB)  {energy}
\begin{equation}\label{ACB-tetrah}
\begin{split}
\tilde {\Phi}^ {a, CB} (y) &:= \frac {\varepsilon  ^ 3} 6  \sum _{\ell \in \Lat } \ \,   \sum _{T \in K_\ell(T) }   \sum _{\eta \in R}\  \phi _\eta \, (\widetilde {\nabla} y\,  \eta )
 = \frac {\varepsilon  ^ 3} 6 \sum _{\ell \in \Lat } \,   \sum _{T \in K_\ell(T) }    W_{CB} ( \widetilde {\nabla}y    ).
 \end{split}
\end{equation}
Now, for a given field of external forces $f : \Lat \to \mathbb{R}^3$ the \emph{{tetrahedral} A--CB problem}    reads as follows:
\begin{equation*}
\begin{split}
  & \text{find a local minimizer $y^{a, CB}$  in $\mathcal{X} $ of : }\\
 &\ \ \tilde  {\Phi}^ {a, CB} (y^{a, CB}) - \< f , y^{a, CB} \> _\varepsilon   .
\end{split}
\end{equation*}
If such a minimizer exists, then
\begin{equation*}
\begin{split}
   \< D \tilde  {\Phi}^ {a, CB} (y^{a, CB}) , v\> _\varepsilon =   \< f , v \> _\varepsilon   , \qquad \text{ for all } v\in \V.
\end{split}
\end{equation*}
This atomistic model is \emph{consistent}, in the sense that the above is satisfied for \emph{homogeneous} deformations ($y_F (x) = \F x$, $x\in\Omega$):
\be
 \< D \tilde  {\Phi}^ {a, CB}  (y_F), v\>=  0, \quad y_F (x) = \F x,
  \ee
for all   $v\in \Wh.$   
%%%%%%%%%%%%
To show that, it suffices to observe
\be \begin{split}
\< D\tilde  {\Phi}^ {a, CB}  (y_F), v\>=&
\frac {\varepsilon  ^ 3} 6  \sum _{\ell \in \Lat } \ \,   \sum _{T \in K_\ell(T) }   
\sum _{\eta \in R}\  \phi _\eta \, (\widetilde {\nabla} y_F \, \eta )\cdot  \widetilde {\nabla} v\,  \eta\\
=&\sum _{\eta \in R}\ 
\ph'(F\,\eta)\cdot
\sum _{\ell \in \Lat } \ \,   \frac {\varepsilon  ^ 3} 6 \sum _{T \in K_\ell(T) }     \widetilde {\nabla} v\,  \eta\\
=&\sum _{\eta \in R}\ 
\ph'(F\,\eta)\cdot
\sum _{\ell \in \Lat } \ \,     \sum _{T \in K_\ell(T) }   \int _T    {\nabla} v\,  \eta \, dx\\
=&\sum _{\eta \in R}\ 
\ph'(F\,\eta)\cdot
 \int _\Omega    {\nabla} v\,  \eta \, dx= 0  . 
 \end{split}\ee
%
%
%One can link  $\mathcal{A}^{a,CB}$ to  $D {\Phi}^ {a, CB}$ as follows:
%let $y\in  \Vh ;$ then, for any $v\in \Vh$,
%\begin{equation}\label{at-CB-D-3D}
%\begin{split}
% \<\mathcal{A}^{a,CB} , v\> _\varepsilon
% =   \< D {\Phi}^ {a, CB} (y ) , v\> _\varepsilon   .
%\end{split}
% \end{equation}
%One can show, \cite{MS11}, that this model is indeed a second order approximation of the atomistic model in the following sense: Let $v\in \Vh$ then 
% \begin{equation*}
%\begin{split}
%\Big |   \< D{\Phi}^ {a} (y), v\> _\varepsilon  -  \<\mathcal{A}^{a,CB}  , v\> _\varepsilon \Big |  &   \leq M \, \varepsilon ^2   \,  | v| _{W^{1, p}(\Omega)}.
%\end{split}
%\end{equation*}
%
%
An alternative  discrete model defined over cells was introduced in  \cite{MS11}.  
 The average discrete derivatives were defined, e.g.,  as
\begin{equation}
\label{avDiscrDer3D}
\begin{split}
 \overline{\DD}_{e_1} v_{\ell}   = \frac 1 4\, \Big \{ \DD_{e_1} v_{\ell}    +\DD_{e_1} v_{\ell+ e_2} +\DD_{e_1} v_{\ell+ e_3} +\DD_{e_1} v_{\ell+ e_2+e_3}       \Big \}.
\end{split}
\end{equation}
This leads to a discrete gradient $\overline {\nabla} y$ in analogy to \eqref{DT}; see  \cite{MS11} for details.   The corresponding   \emph{{cell atomistic Cauchy--Born energy}} is then defined by
\begin{equation*}
\begin{split}
 {\Phi}^ {a, CB} (y) &:=  {\varepsilon  ^ 3}    \sum _{\ell \in \Lat } \ \,       
 \sum _{\eta \in R}\  \phi _\eta \, (\overline {\nabla} y\,  \eta )
 =  {\varepsilon  ^ 3}   \sum _{\ell \in \Lat } \,     W_{CB} ( \overline {\nabla}y    ).
 \end{split}
\end{equation*}

The corresponding  \emph{{cell atomistic Cauchy--Born problem}}  is 
\begin{equation*}
\begin{split}
  & \text{find a local minimizer $y^{a, CB}$  in $\mathcal{X} $ of : }\\
 &\ \  {\Phi}^ {a, CB} (y^{a, CB}) - \< f , y^{a, CB} \> _\varepsilon   .
\end{split}
\end{equation*}
%If such a minimizer exists, then
%\begin{equation*}
%\begin{split}
%   \< D \tilde  {\Phi}^ {a, CB} (y^{a, CB}) , v\> _\varepsilon =   \< f , v \> _\varepsilon   , \qquad \text{ for all } v\in \V.
%\end{split}
%\end{equation*}
This atomistic model is  consistent as well, in the sense that 
\be
 \< D   {\Phi}^ {a, CB}  (y_F), v\>=  0, \quad y_F (x) = \F x\, ,
  \ee
for all   $v\in \Vh.$   
%%%%%%%%%%%%
As before, this is implied by 
\be \begin{split}
\< D   {\Phi}^ {a, CB}  (y_F), v\>=&
  {\varepsilon  ^ 3}    \sum _{\ell \in \Lat } \     \sum _{\eta \in R}\  \phi _\eta \, (\overline  {\nabla} y_F \eta )\cdot \overline  {\nabla} v  \,\eta\\
=&\sum _{\eta \in R}\ 
\ph'(F\,\eta)\cdot
\sum _{\ell \in \Lat } \  {\varepsilon  ^ 3}    \,    \overline  {\nabla} v \, \eta\\
=&\sum _{\eta \in R}\ 
\ph'(F\,\eta)\cdot
\sum _{\ell \in \Lat } \ \,         \int _{K_\ell}    {\nabla} v\,  \eta\,  dx\\
=&\sum _{\eta \in R}\ 
\ph'(F\,\eta)\cdot
 \int _\Omega    {\nabla} v\,  \eta \, dx= 0\, . 
 \end{split}\ee
%
%Then one can show,
%with ${m_K} $ being  the  barycenter of $K$ and $g$ a smooth function, that  there exists a constant $C=C(y)$ such that
% \begin{equation*}
%\begin{split}
%  \Big | \sum _{K \in \T } \, \Big [ \int _K \, g (\nabla y(x))   \dd x - | K| \, g (\nabla y({m_K} ))\, \Big ]\  \overline{\DD}_{e_1} v_{\ell}   \ \Big | \leq C (y) \, \,  \varepsilon ^2 \,  | v| _{W^{1, p}(\Omega)}   .
%  \end{split}
%\end{equation*}
It was shown in  \cite{MS11} that this model is both energy- and variationally  consistent to second order in $\varepsilon,$ approximating the exact atomistic model as well as the  continuum Cauchy-Born model.

\section{Bond volumes and long range differences}
To  construct methods that couple the atomistic and continuum descriptions  we need to relate long range differences and derivatives of functions defined over  \emph{bond volumes}. 
%Let $\Om$, $\Om_a$ and $\Om_*$ each be    the interior of the closure of the union of lattice cells $K\in \T _Q $ and connected, and suppose $$\bar\Om=\bar\Om_a\cup\bar\Om_*, \quad \GG=\bar\Om_a\cap\bar\Om_* .$$
% Here $\GG$ is the interface. Let $y_\ell$ be the deformed position of $\ell\in \Lat$,  define $\hat y\in \Wh$ to be the  interpolant of $\{ y_\ell\};$ 
% %that is continuous on $\Om$ and linear in each lattice triangle (we have divided each lattice cell into 2 lattice triangles), 
%in this case $\nabla \hat y$ is piecewise constant. 
 To fix ideas, let $\eta\in R,$ and define the \emph{bond} as the line segment  $b_{\ell} =\{x\in\RR^3\colon {x=x_{\ell+t\eta}},\; 0<t<1\}$ with endpoints $x_\ell$ and $x_{\ell+\eta}$. The set of all bonds  $\mathcal{B}_\eta$ consists of all $b=b_\ell $ for $\ell\in \Lat$ (but for $\eta$ fixed).
%Define $\overline{\nabla} y_\ell $ as in \cite{Masu}.  
%Further let $\overline{\nabla} y_\ell $ be the discrete gradient matrix as in
%\eqref{a-CB-grad}.
%Let $\chi_{{}_{K_\ell }}$ equal 1 inside $K_\ell $, $1/2$ on the edges of $K_\ell $ except vertices where it equals $1/4$, and $0$ elsewhere. %, as in \cite{Shapeev}. 
%Write 
%\be\label{bar}\overline{\nabla}  y(x)=\sum_{\substack{\ell\in \Lat\\K_\ell\subset\bar\Om_*}}\chi_{{}_{K_\ell}}(x)\overline{\nabla}  y_\ell,  \quad x\in \Om_* ,\ee
%so that $\overline{\nabla}  y(x)=\overline{\nabla}  y_\ell $ for $x$ inside $K_\ell $, keeping in mind that $\overline{\nabla}  y$ need not be the gradient of some Lipschitz function.  Let $\chi_{{}_{\Om_\ast}}=\sum_{K\subset\bar\Om_*}\chi_{{}_{K}}$.
%
%
% As before for fixed $\eta\in R$  a bond is the line segment  $b_{\ell} =\{x\in\RR^2\colon x=\ell+t\eta,\; 0<t<1\}$. 
For  given $\ell$ and $\eta\in\mathbb{Z}^3$ with $\eta_1\eta_2\eta_3\ne 0,$ the corresponding  \emph{bond volume} $B_{\ell, \, \eta} $  is the interior of the  rectangular parallelepiped  with  edges parallel to the standard basis vectors $e_i$  and main diagonal  $b_{\ell}$, see Fig. 3.
% i.e.,   the open parallelepiped with vertices $\ x_{\ell}$,  $x_{\ell+ \eta_1e_1}$, $ x_{\ell+ \eta_2e_2}$,  $x_{\ell+ \eta_3e_3}$,  
%$ x_{\ell+ \eta_1e_1+ \eta_2e_2}$,  $x_{\ell+ \eta_1e_1+ \eta_3e_3}$,  $x_{\ell+ \eta_2e_2+ \eta_3e_3}$,  $ x_{\ell+ \eta}$.
Next we shall establish a connection between long range differences and piecewise linear functions
defined over  type A  decompositions of bond volumes $B_{\ell, \, \eta} $ into tetrahedra, which is defined in analogy to type A  decompositions of cells $K_\ell.$ To this end 
let  $B_{\ell, \, \eta} (T) $ a \emph{type A decomposition} of the 
bond volume  $B_{\ell, \, \eta} $  into six tetrahedra, i.e.,  the decomposition were   the  diagonals 
$(x_{\ell} , x _{\ell+e_1 \eta_1 +e_3 \eta_3})$
 and $(x_{\ell+e_2\eta_2} , x _{\ell+\eta})$ are edges of the resulting tetrahedra, see Fig. 3.
\begin{figure}
    \begin{center}
      \includegraphics[width=0.35\textwidth]{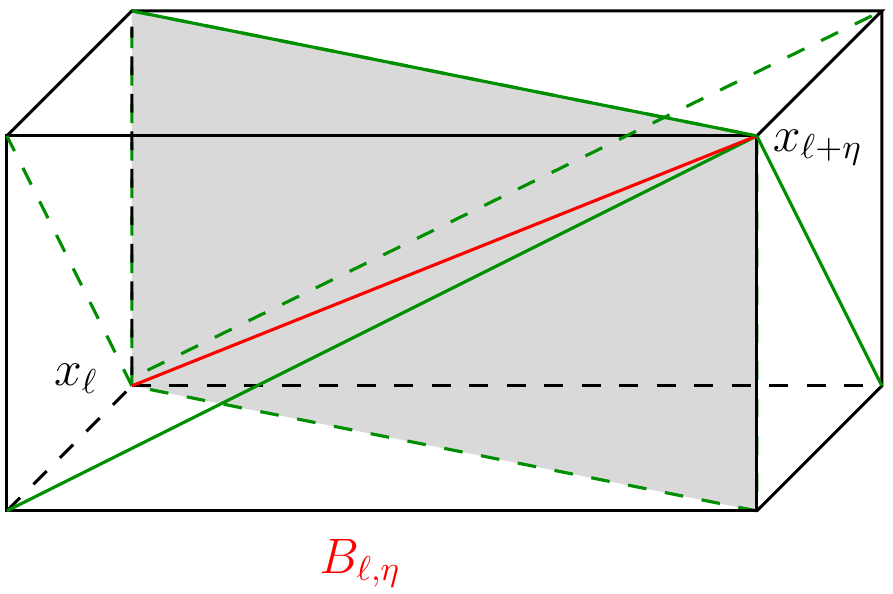}%{./lattice3D_decomp}
      \caption{ A \emph{bond volume} $B_{\ell, \, \eta} $ and its  \emph{type A decomposition}    into six tetrahedra.}
    \label{Fig:3}
   \end{center}
\end{figure}

The following  lemma plays a central role in our work.  

\begin{lemma}\label{bv_lemma} Let $v $ be a \emph{piecewise linear and continuous} function on
a type  A  decomposition of the bond volume $B_{\ell, \, \eta} $ into tetrahedra.   Then
\be 
\varepsilon ^3 \, \overline D_\eta v_\ell = \frac {1} { | \eta_1\, \eta_2\, \eta_3 |} \, \int _ {B_{\ell, \, \eta}}\, \nabla v (x)  \eta\, dx\, .
\ee
\end{lemma}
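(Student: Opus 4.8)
The plan is to exploit the defining geometric feature of the type~A decomposition of $B_{\ell,\eta}$: the main diagonal $b_\ell$, which points in the direction $\eta$ and joins $x_\ell$ to $x_{\ell+\eta}$, is a common edge of all six tetrahedra (exactly as in the cell case, see Fig.~3). After translating so that $x_\ell$ sits at the origin, $B_{\ell,\eta}$ is the box with side lengths $\varepsilon|\eta_1|,\varepsilon|\eta_2|,\varepsilon|\eta_3|$, so its Lebesgue measure is $|B_{\ell,\eta}| = \varepsilon^3\,|\eta_1\eta_2\eta_3|$. The central claim I would establish is that the directional derivative $\nabla v|_T\,\eta$ is the \emph{same} constant vector on every tetrahedron $T$ of the decomposition, namely $\nabla v|_T\,\eta = \DD_\eta v_\ell$.

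To prove this claim I would argue along the common diagonal. Parametrize $b_\ell$ by $x(t)=x_\ell+t\,\varepsilon\eta$, $t\in[0,1]$, so that $x(0)=x_\ell$ and $x(1)=x_{\ell+\eta}$. Since $v$ is continuous and affine on each tetrahedron, and since this segment is an edge of every tetrahedron, the restriction $t\mapsto v(x(t))$ is affine on $[0,1]$ with endpoint values $v_\ell$ and $v_{\ell+\eta}$; hence its $t$-derivative equals $v_{\ell+\eta}-v_\ell$. On the other hand, on any single tetrahedron $T$ containing this edge the chain rule gives $\tfrac{d}{dt}v(x(t)) = \varepsilon\,\nabla v|_T\,\eta$. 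Equating the two expressions yields $\varepsilon\,\nabla v|_T\,\eta = v_{\ell+\eta}-v_\ell$, i.e. $\nabla v|_T\,\eta = \DD_\eta v_\ell$ for every $T$, as claimed. The key point is that $\nabla v$ may differ from tetrahedron to tetrahedron, but its contraction with the diagonal direction $\eta$ cannot, precisely because the diagonal is shared.

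With the claim in hand the conclusion is immediate: since $\nabla v\,\eta$ is piecewise constant and in fact equal to the single vector $\DD_\eta v_\ell$ throughout $B_{\ell,\eta}$,
\[
\int_{B_{\ell,\eta}} \nabla v(x)\,\eta\,dx \;=\; \Big(\sum_{T} |T|\Big)\,\DD_\eta v_\ell \;=\; |B_{\ell,\eta}|\,\DD_\eta v_\ell \;=\; \varepsilon^3\,|\eta_1\eta_2\eta_3|\;\DD_\eta v_\ell,
\]
and dividing by $|\eta_1\eta_2\eta_3|$ gives the asserted identity $\varepsilon^3\,\DD_\eta v_\ell = \frac{1}{|\eta_1\eta_2\eta_3|}\int_{B_{\ell,\eta}}\nabla v(x)\,\eta\,dx$.

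I expect the only genuine obstacle to be bookkeeping rather than conceptual: one must confirm that the type~A decomposition really does have $b_\ell$ as an edge of all six tetrahedra, and one must track the signs of the components $\eta_i$ so that the box dimensions, and hence $|B_{\ell,\eta}|$, come out as $\varepsilon^3|\eta_1\eta_2\eta_3|$ regardless of orientation. Neither issue affects the argument above, since the diagonal parametrization uses the signed vector $\eta$ throughout, so the difference-quotient computation is insensitive to the signs of the $\eta_i$.
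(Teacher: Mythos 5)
Your proof is correct, but it follows a genuinely different route from the paper's. You argue in the interior of the bond volume: because the main diagonal $(x_\ell,x_{\ell+\eta})$ is a common edge of all six tetrahedra of a type A decomposition, restricting $v$ to that edge and differentiating forces $\nabla v|_T\,\eta=\overline{D}_\eta v_\ell$ on \emph{every} tetrahedron $T$, so the integrand is a single constant vector and the identity reduces to the volume computation $|B_{\ell,\eta}|=\varepsilon^3|\eta_1\eta_2\eta_3|$. The paper instead applies the divergence theorem to convert the volume integral into boundary integrals over the twelve face triangles, evaluates each by the exact vertex quadrature rule for affine functions, and shows by a counting argument that the contributions of all vertices other than $x_\ell$ and $x_{\ell+\eta}$ cancel in pairs. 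Your argument is shorter and proves slightly more (constancy of $\nabla v\,\eta$ throughout $B_{\ell,\eta}$), but it leans on the interior structure of the decomposition, namely that the diagonal is shared by all six tetrahedra --- precisely the point you flag for verification, and which does hold: the cell-case definition in Section 2 states explicitly that the main diagonal is among the edges of the six tetrahedra, and the bond-volume decomposition is defined in analogy. The paper's proof, by contrast, uses only the induced triangulation of $\partial B_{\ell,\eta}$ and the fact that $x_\ell$ and $x_{\ell+\eta}$ are the shared vertices of the face triangulations, so it is insensitive to how the interior is triangulated; that extra robustness is in the spirit of the later interface constructions, where interior decompositions of bond volumes are deliberately left flexible. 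Both proofs are complete and establish the same identity.
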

\begin{proof} {We present the proof for $\eta_i>0, $ $i=1,2,3$. The other cases are similar.}
We have, 
\be\label{BelletaC}
\begin{split}
\frac {1} { \eta_1\, \eta_2\, \eta_3} \, \int _ {B_{\ell, \, \eta}}\, \nabla v (x)  \eta \, dx&
= \frac {1} { \eta_1\, \eta_2\, \eta_3} \int _ {\partial B_{\ell, \, \eta}}  v \,\nu \cdot \eta   \, ds \, \\
= \frac {1} { \eta_1\, \eta_2\, \eta_3} 
&\sum _{i=1}^3  \Big \{ \int _ {   \partial B_{\ell, \, \eta}   (-e_i)}\,  (-\eta_i) v  \, ds + \int _   {\partial B_{\ell, \, \eta}   (e_i) }\,   \eta _i \,  v  \, ds \, \Big \}\, , 
\end{split}
\ee
where  $\partial B_{\ell, \, \eta}   (e_i)$ is the face of $B_{\ell, \, \eta} $ with outward unit normal  $e_i.$
Since $v$ is linear in each tetrahedron of the decomposition of $B_{\ell, \, \eta}, $ it will be 
linear in each  of the two triangles comprising the face   $\partial B_{\ell, \, \eta}   (\eta_i).$  
Therefore, if $\tau$ is such a triangle, the  integral of $v$ over $\tau$ can be found explicitly:
\be
  \int _ {\tau}\,   \eta _i \,  v  \, ds= \frac {|\tau |} 3 \sum _{j=1} ^3 \ \eta _i \,  v(z_j) ,  
\ee
where $z_i$  are the vertices of  $\tau$. 
Since $\tau$ is one of the two triangles of   $\partial B_{\ell, \, \eta}   (\eta_i)$,
${|\tau |}   \ \eta _i  = \frac {\varepsilon ^2} 2 \  \eta_1\, \eta_2\, \eta_3$. 
Hence, 
\be
\frac {1} { \eta_1\, \eta_2\, \eta_3}
 \int _   {\partial B_{\ell, \, \eta}   (e_i) }\,   \eta _i \,  v  \, ds \, 
 =  \frac {\varepsilon ^2 } 6 \sum _{j=1} ^2 \ \big  \{  v(z_j) +  2\, v(\tilde z_j) \big  \}  , 
\ee
where $\tilde z_j$ are the vertices shared by two triangles of  $\partial B_{\ell, \, \eta}   (e_i)$
and $z_j$ the vertices belonging to only one triangle of  $\partial B_{\ell, \, \eta}   (e_i)$.
 
We substitute the above formula into \eqref{BelletaC} and group together all
terms involving each vertex. For each of the vertices other than $ x_{\ell}$ or $ x_{\ell +\eta}$, there
are two possibilities: 
\begin{enumerate}
\item[(i)] It is a  shared vertex   in one face with outward normal $e_i$ and it is a single vertex 
        in \emph{two} faces  with normal $-e_i$.
\item[(ii)] It is a  shared vertex  in one face with normal $-e_i$ and a single vertex in \emph{two} faces 
       with normal $e_i$. 
\end{enumerate}
Also, terms involving a vertex of $ \partial B_{\ell, \, \eta}   (e_i)$ appear with coefficient $1$, while 
terms involving a vertex of $ \partial B_{\ell, \, \eta}   (-e_i)$ appear with coefficient $-1$ in
\eqref{BelletaC}. 
Therefore the contribution of these vertices to the sum in \eqref{BelletaC}  is zero. 

Finally, we notice  that
$x_{\ell +\eta}$ is a shared vertex at each $\partial B_{\ell, \, \eta}   (e_i)$,
while $x_{\ell }$ is a shared vertex at each $\partial B_{\ell, \, -\eta}   (-e_i)$, for all $ i=1, 2, 3$.
It follows that
\be
\begin{split}
\frac {1} { \eta_1\, \eta_2\, \eta_3 } \, \int _ {B_{\ell, \, \eta}}\, \nabla v (x) \cdot \eta \, dx = \varepsilon ^2 \, \big ( v _{\ell+ \eta }     - v_  {\ell } \big ) , 
\end{split}
\ee
and the proof is complete.  
\end{proof}

\begin{figure}
    \begin{center}
      \includegraphics[width=0.5\textwidth]{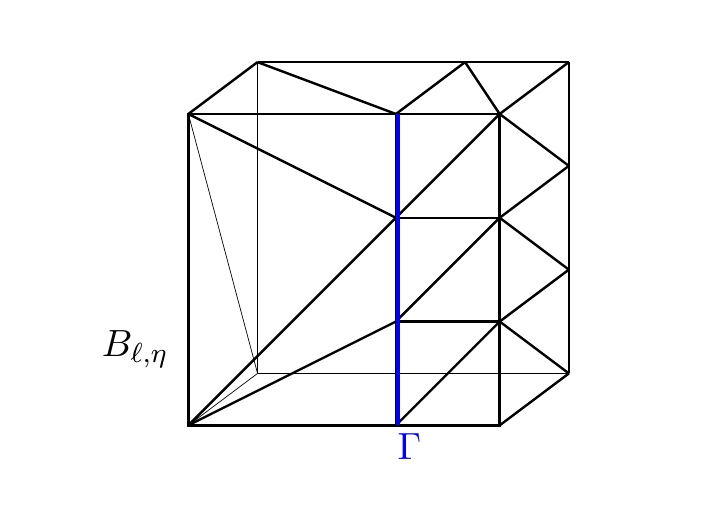}
      \caption{A possible  decomposition $\T (B_{\ell, \, \eta}) $ of $B_{\ell, \, \eta}.$}
    \label{Fig:2}
   \end{center}
\end{figure}

\section{A coupling method based on   bond volumes }  

In this section we construct methods based on \emph{bond volumes}. 
Let the \emph{atomistic region} $\Om_a$ and the \emph{A-CB region} $\Om_*$ each be  the interior of the closure of a union of lattice 
tetrahedra
%cells 
$T\in \T _T $ and connected, and suppose 
$$\Om=\overline{\Om}_a\cup\overline{\Om}_*, \quad \GG=\overline{\Om}_a\cap\overline{\Om}_* .$$
Here $\GG$ is the interface.
To avoid technicalities that may arise due to the fact that we work with  periodic functions
over $\Om$,  we assume throughout  that $\overline{ \Om}_a$ is subset of the {interior} of $\Om$ with sufficient distance from $\partial \Omega.$   Let $y_\ell$ be the deformed position of $x_\ell\in \Omega_{\text{discr}}$. 
%Define $\hat y$ to be the  interpolant of $\{ y_\ell\}$
%in $\Wh$, so that $\nabla \hat y$ is piecewise constant. 
 %that is continuous on $\Om$ and linear in each lattice triangle (we have divided each lattice cell into 2 lattice triangles), 
%in this case 

Fix $\eta\in R, $ with $\eta_1\eta_2\eta_3\ne 0.$ The cases of degenerate $\eta$ can be treated with 
two and one dimensional techniques. We shall construct an energy based  coupling method whose   design relies on an appropriate handling of  bond volumes $B_{\ell, \, \eta}$.  We consider three cases depending on the
location of each bond volume $B_{\ell, \, \eta} $:
\begin{enumerate} 
\item[(a)] The closure of the bond volume is contained in the atomistic region: 
        $\overline B_{\ell, \, \eta} \subset\Om_a$.
\item[(b)] The bond volume is contained in the  region  $\Om_*$:  $B_{\ell, \, \eta} \subset\Om_*$.
\item[(c)] We denote by $ B_\Gamma$ the set of bond volumes that do not satisfy (a) or (b). 
         In fact,  $B_{\ell, \, \eta}\in  B_\Gamma$ if the bond volume intersects  the interface:      
         $B_{\ell, \, \eta} \cap\Gamma\neq\emptyset$ or if $  B_{\ell, \, \eta} \subset\Om_a$ and  
         $\overline B_{\ell, \, \eta} \cap\Gamma\neq\emptyset$.
\end{enumerate} 
If a bond volume intersects $\partial \Omega,$ then it is supposed to belong to $\Om_*$ by periodic extension.  
For a fixed $\eta,$ the contribution to the energy corresponding to the atomistic region (case (a)) is
 \be\label{energy:a}
  E_{\Om_a, \eta }^a\{y\} = \varepsilon ^3 \sum_{\substack{\ell\in \Lat \\\overline B_{\ell, \, \eta} \subset\Om_a}}\ph(\overline D_\eta y_\ell )\, . 
   \ee
The contribution to the energy from the A-CB region (case (b)) is (cf. \eqref{ACB-tetrah}), 
\be  \label {overline_y}   E_{\Om_*, \eta }^{a,cb}\{y\} =
\frac {\varepsilon  ^ 3} 6    \,   \sum _{\ell \in \Lat, \ T \in K_\ell(T), \ T \subset \Om_* }    \  \phi _\eta \, (\widetilde {\nabla} y\,  \eta )=\int_{\Om_*}\ph({\nabla}  \overline y(x)\eta)dx\, , 
      \ee
$\overline y$ being the interpolant of $\{y_\ell\}$ in $\Wh(\Om_*)$, see below \eqref{Vet}.

For each bond volume intersecting  $\Gamma$ we denote by $y^{\ell, \eta}$ a piecewise polynomial  function on 
$B_{\ell, \, \eta} $ 
satisfying 
 \begin{enumerate} 
\item[i)] $y^{\ell, \eta}\in C (\overline {B}_{\ell, \, \eta} )$.
\item[ii)] Let $\T (B_{\ell, \, \eta}) $ be  a decomposition %, cf. Figure \ref{Fig:2},   
of  $ B_{\ell, \, \eta}$ with the following properties: a) if  {$T \in \T (B_{\ell, \, \eta})$ and $T \subset \Om_*$} then 
{$T$}  is a tetrahedron resulting from a type A decomposition of an atomistic cell $K\subset\Om_*$. 
 b) If   {$T \in \T (B_{\ell, \, \eta})$ and $T \subset \Om_a$, then $T$} is a lattice tetrahedron. 
 %In this case $y^{\ell, \eta} \in \mathbb{P} _1 (K).$ 
 %
\item[iii)] {In  case ii.b)   above,  if $T$ has a face on   $\partial \big ( B_{\ell, \, \eta}\cap \Om_a \big ) \backslash \Gamma$, then it is part of a conforming decomposition that is compatible with 
decompositions of other bond volumes sharing a face with $B_{\ell,\eta}$. If such an attached bond volume is included in $\Om_a$, then it is assumed to be  type-A decomposed into tetrahedra.}
   \item[iv)] For {$T \in \T (B_{\ell, \, \eta}),$} $y^{\ell, \eta} \in \mathbb{P} _1 (T)$ and it  interpolates 
                  $\{y_\ell\}$ at the vertices of {$T$}.
\end{enumerate}

Then the energy due to bond volumes intersecting  the interface is defined as
\begin{equation}\label{energy:GG}
 E_{\GG, \eta }^{}\{y\}= \sum_{
    \substack{\ell \in \Lat \\ B_{\ell, \, \eta}\in  B_\Gamma} }\frac 1 {|\eta_ 1\, \eta_ 2\,   \eta_ 3|} \int_{B_{\ell, \, \eta } }\chi_{{}_{\Om_a}}\ph(\nabla y^{\ell, \eta}\eta)\,  dx \, .
   \end{equation}
  \begin{remark} Notice that the energy that corresponds to the bond volume $B_{\ell, \, \eta }\in B_{\Gamma}$ would be 
  \begin{equation}
 \frac 1 {|\eta_ 1\, \eta_ 2\,   \eta_ 3|} \int_{B_{\ell, \, \eta } }\ph(\nabla y^{\ell, \eta}\eta)\,  dx \, .
   \end{equation}
The part of this energy corresponding to $  B_{\ell, \, \eta }\cap \Om_*$ has been already taken into account in  $E_{\Om_*, \eta }^{a,cb}\{y\}$ and hence it is not 
included in the definition of $E_{\GG, \eta }^{}\{y\}\, .$  
 \end{remark}
  \begin{remark} The choice of the decomposition $\T (B_{\ell, \, \eta}) $ and of the associated    piecewise polynomial  function $y^{\ell, \eta}$ is somewhat  flexible; see {\cite{MMR12} for a more detailed discussion.} It might even allow
  vertices that are not lattice points.  The only essential requirement 
  is 
that each function $v ^ {[m]}   $ defined through $\T (B_{\ell, \, \eta}) $  in the proof of Proposition \ref{bv_gffree1}  below, should satisfy $v ^ {[m]}  \in H^1 (\Omega)\, .$ Depending on the complexity of the
interface $\Gamma$ one can construct  such decompositions more or less efficiently.  In many cases this can simplify  the computation of the associated energy $ E_{\GG, \eta }^{}\{y\}$. 
See for example, Figure 4 for such a choice of decomposition.   

\end{remark}

We then define the total energy as follows
\be  \label{E_bv}  \mathcal{E} _{bv}\{y\} = \sum _{\eta \in R}  \mathcal{E} _{\, \eta}\{y\}   \ee
where 
 \be
  \mathcal{E} _{\, \eta}\{y\}=  E_{\Om_a, \eta }^a\{y\} +  E_{\Om_*, \eta }^{a,cb}\{y\} + E_{\GG, \eta }^{}\{y\}\, .
   \ee

%\begin{center}
%    \begin{figure}
%    \begin{tabular}{cc}
%   \includegraphics[width=0.4\textwidth]{./lattice-eta_bond_volume2_crop}    & 
%    \includegraphics[width=0.43\textwidth]{./lattice-eta_bond_volume3_crop}
%\end{tabular}  
%      \caption{Alternative  decompositions $\T (B_{\ell, \, \eta}) $ of $B_{\ell, \, \eta}$ for two different bonds.}
%    \label{Fig:2}
%   \end{figure}
%\end{center}
  
\subsection{Consistency}
The  energy \eqref{E_bv} based on bond volumes is ghost-force free, as  we  prove in the following proposition.
\begin{proposition}\label{bv_gffree1} The energy \eqref{E_bv} is free of ghost forces, in the sense that 
\be
 \< D \mathcal{E} _{bv} (y_F), v\>=  0, \quad y_F (x) = \F x\, , \ee
for all { $ v\in \V $.
   }
\end{proposition}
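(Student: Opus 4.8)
The plan is to fix one interaction vector $\eta\in R$ with $\eta_1\eta_2\eta_3\neq 0$ (the degenerate directions being lower dimensional and treated separately) and to prove $\< D\mathcal{E}_\eta(y_F),v\>=0$ for every $v\in\V$; summing over $\eta\in R$ then yields the assertion. I would first differentiate the three pieces of $\mathcal{E}_\eta$ at $y_F$. Since the interpolants reproduce affine maps exactly, the arguments $\overline D_\eta y_{F,\ell}=\F\eta$ and $\nabla\overline y_F=\nabla y^{\ell,\eta}_F=\F$ are constant, so the common factor $\ph'(\F\eta)$ factors out of every term and it remains to show that
\[
 \mathrm{(I)}+\mathrm{(II)}+\mathrm{(III)}=0,
\]
where $\mathrm{(I)}=\tfrac1{|\eta_1\eta_2\eta_3|}\sum_{\overline B_{\ell,\eta}\subset\Om_a}\int_{B_{\ell,\eta}}\nabla\tilde v^\ell\,\eta\,dx$ arises from the atomistic term after applying Lemma~\ref{bv_lemma} (which turns the discrete sum $\varepsilon^3\sum\overline D_\eta v_\ell$ into exactly these volume integrals), $\mathrm{(II)}=\int_{\Om_*}\nabla\overline v\,\eta\,dx$ from the A--CB term, and $\mathrm{(III)}=\tfrac1{|\eta_1\eta_2\eta_3|}\sum_{B_{\ell,\eta}\in B_\Gamma}\int_{B_{\ell,\eta}}\chi_{\Om_a}\nabla v^{\ell,\eta}\,\eta\,dx$ from the interface term; here $\tilde v^\ell$, $\overline v$, $v^{\ell,\eta}$ are the piecewise linear interpolants of the nodal data $\{v_\ell\}$ attached to cases (a), (b), (c).

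The core of the argument is to reassemble these volume integrals into integrals of \emph{globally continuous} functions over the whole period cell. For fixed $\eta$ (say $\eta_i>0$), every point of $\Om$ lies in exactly $M:=\eta_1\eta_2\eta_3$ of the bond volumes $B_{\ell,\eta}$, and the index set splits into $M$ classes --- the cosets of the sublattice generated by $\varepsilon\eta_1 e_1,\varepsilon\eta_2 e_2,\varepsilon\eta_3 e_3$ in $\varepsilon\ZZ^3$ --- such that within each class $m$ the bond volumes tile $\Om$ periodically without overlap. For each class I would glue the local interpolants ($\tilde v^\ell$, $\overline v$, or $v^{\ell,\eta}$, according to the case of each bond volume) into a single function $v^{[m]}$ on $\Om$. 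The decisive point, and the main obstacle, is to verify that $v^{[m]}$ is globally continuous, i.e.\ $v^{[m]}\in H^1(\Om)$: across any face shared by two adjacent class-$m$ bond volumes the two local pieces are piecewise linear and interpolate the same nodal values, so they coincide precisely when the induced face triangulations match. This compatibility is exactly what the conformity requirements (i)--(iv) on the decompositions $\T(B_{\ell,\eta})$ are designed to guarantee --- in particular (iii), which forces the tetrahedralizations of bond volumes in $\Om_a$ and across $\GG$ to be conforming with those of their neighbours, and the part of (ii) requiring every piece lying in $\Om_*$ to coincide with the global $\Wh(\Om_*)$ interpolant $\overline v$.

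Once each $v^{[m]}$ is known to be continuous, $\Lat$-periodic and piecewise polynomial, its gradient has vanishing mean over the period cell, since $\int_\Om\nabla v^{[m]}\,\eta\,dx=\sum_\alpha\eta_\alpha\int_\Om\partial_\alpha v^{[m]}\,dx=0$ by periodicity. Summing over $m=1,\dots,M$ and dividing by $M$, the left-hand side becomes $\tfrac1M\sum_{\ell}\int_{B_{\ell,\eta}}\nabla w^\ell\,\eta\,dx$, where $w^\ell$ denotes the local interpolant on $B_{\ell,\eta}$. Splitting this sum according to the three cases reproduces $\mathrm{(I)}$, $\mathrm{(III)}$ and the remaining $\Om_*$-parts; using that every piece inside $\Om_*$ equals $\overline v$, the $\Om_*$-parts of the crossing bond volumes together with the case-(b) bond volumes reconstitute the single integral $\int_{\Om_*}\nabla\overline v\,\eta\,dx=\mathrm{(II)}$, the factor $1/M$ cancelling against the $M$ identical class contributions on $\Om_*$. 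Hence $\mathrm{(I)}+\mathrm{(II)}+\mathrm{(III)}=0$, so $\< D\mathcal{E}_\eta(y_F),v\>=\ph'(\F\eta)\bigl(\mathrm{(I)}+\mathrm{(II)}+\mathrm{(III)}\bigr)=0$, and summing over $\eta\in R$ gives $\< D\mathcal{E}_{bv}(y_F),v\>=0$, as claimed.

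I expect the genuine difficulty to be concentrated entirely in the middle step: producing the globally continuous representatives $v^{[m]}\in H^1(\Om)$. The reduction in the first paragraph and the periodicity/counting bookkeeping in the third are routine once Lemma~\ref{bv_lemma} is available; it is the geometric compatibility of the bond-volume decompositions across their shared faces --- and across the interface $\GG$ --- that must be set up so carefully, which is precisely the role of conditions (i)--(iv) and of the remark requiring $v^{[m]}\in H^1(\Om)$.
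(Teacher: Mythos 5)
Your proposal is correct and follows essentially the same route as the paper: the same reduction to $\ph'(\F\eta)$ times a sum of volume integrals via Lemma~\ref{bv_lemma}, the same partition of the bond volumes into $|\eta_1\eta_2\eta_3|$ non-overlapping periodic coverings (the paper's $\mathcal{S}^m_{B_\eta}$, which you describe more explicitly as cosets of the sublattice generated by $\varepsilon\eta_i e_i$), the same glued functions $v^{[m]}\in H^1(\Omega)$ built from the local interpolants, and the same conclusion from periodicity/Gauss--Green. You also correctly locate the essential point --- global continuity of $v^{[m]}$ across shared faces, secured by conditions (i)--(iv) --- which the paper itself singles out in its Remark 4.2.
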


%%%%%%%%%%%%
\noindent
To show this proposition we shall need some more notation. 
 First we fix $\eta$ and  consider decompositions into  bond volumes which cover  $\mathbb{R}^3$:
\begin{equation}
\begin{split}
&\mathcal{S} ^m_{B_\eta}  :=\Big  \{ B_{\ell, \, \eta}  \, : \  
\text{(i)}\  B_{\ell, \, \eta}  \cap B_{j, \, \eta}=\emptyset, \text { if } \ell \ne j,  \quad \text{(ii)} \ \mathbb{R}^2  =
\overline{\cup  B_{\ell, \, \eta} }   \  \Big \}\, , \quad m=1, \dots, |\eta_ 1\, \eta_ 2\,   \eta_ 3|. %\\
%&\Vh := \{ v  : \overline\Omega \to \mathbb{R}^2,    \quad \text{$v\in C(\overline\Omega)\, ,\ v | _{K} \in \mathbb{Q}_1(K)$ \ and  $v_\ell = v ( x_ \ell)$ \quad  periodic with respect to $\Lat$} \},
\end{split}
\end{equation}
$\mathcal{S} ^m_{B_\eta} $ will be used for counting purposes in the proof;  the associated functions 
introduced below will be defined on $\Omega.$
The number of different such coverings is $|\eta_ 1\, \eta_ 2\,   \eta_ 3|\, ,$  hence the numbering $m=1, \dots, |\eta_ 1\, \eta_ 2\,   \eta_ 3|.$ Notice that bond volumes corresponding to different $m$ may overlap, but the elements of a single $\mathcal{S} ^m_{B_\eta} $ are  non-overlapping bond volumes.      

For a lattice function $\{v_\ell\}$  construct the functions ${\nabla}  \overline v$ and $ v^{\ell, \eta} $   in analogy  with $ {\nabla}  \overline y$ and $ y^{\ell, \eta}$ in the construction below  \eqref{overline_y}. 
%Recall that $\overline D_\eta v_\ell =\dint_{b_\ell }  \nabla\hat v \eta$. 
Then for a fixed $\eta$ we have
\be \begin{split}
\< D\mathcal{E} _{\, \eta} (y_F), v\>=  
%=\< DE_\{\Om_a, \eta }^a\{y_F\} + D E_{\Om_*, \eta }^{a,cb}\{y_F\} + DE_{\GG, \eta }^{}\{y\} (y_F), v\>=
\ph'(F\,\eta)\cdot
\Big \{ &\varepsilon ^3 \sum_{\substack{\ell\in \Lat \\\overline B_{\ell, \, \eta} \subset\Om_a}}\,  \overline D_\eta v_\ell 
+\int_{\Om_*}{\nabla}  \overline v(x)\eta\, dx\\\
&+ \sum_{
    \substack{\ell \in \Lat \\ B_{\ell, \, \eta}\in  B_\Gamma } }\frac 1 {| \eta_1\, \eta_2\, \eta_3 |} \int_{B_{\ell, \, \eta } }\chi_{{}_{\Om_a}} \nabla v^{\ell, \eta} \eta\,  dx
\Big \}\, .
 \end{split}\ee
 The main idea in the proof of Proposition \ref{bv_gffree1} is to rewrite the expression within brackets above  in the following way: 
  \be \label{idea}\begin{split}
  \varepsilon ^3  \sum_{\substack{\ell\in \Lat \\\overline B_{\ell, \, \eta} \subset\Om_a}}\,  \overline D_\eta v_\ell 
&+\int_{\Om_*}\overline{\nabla}  v(x)\eta\, dx\
+ \sum_{
    \substack{\ell \in \Lat \\ B_{\ell, \, \eta}\in  B_\Gamma } }\frac 1 {| \eta_1\, \eta_2\, \eta_3 |} \int_{B_{\ell, \, \eta } }\chi_{{}_{\Om_a}} \nabla v^{\ell, \eta} \eta\,  dx\\
&=\frac 1 {| \eta_1\, \eta_2\, \eta_3 | } \sum _{m= 1} ^{| \eta_1\, \eta_2\, \eta_3 |} \, \int_{\Omega }  {\nabla}  v^ {[m]}(x)\eta\, dx,
 \end{split}\ee
 where $v^ {[m]}, $  $ m =1, \dots, | \eta_1\, \eta_2\, \eta_3 |$  are appropriate conforming functions in $H^1(\Omega) $, each  associated to a different  covering $\mathcal{S} ^m_{B_\eta} $
 consisting of bond volumes.
 The details are provided below. 
 
 \noindent
 \emph{Proof of Proposition \ref{bv_gffree1}.}  We use Lemma \ref{bv_lemma} to write 
 
 \be \begin{split}\label{prop2-1}
  \varepsilon ^3  \sum_{\substack{\ell\in \Lat \\\overline B_{\ell, \, \eta} \subset\Om_a}}\,  \overline D_\eta v_\ell 
= \frac 1 {| \eta_1\, \eta_2\, \eta_3 | } \sum_{\substack{\ell\in \Lat \\\overline B_{\ell, \, \eta} \subset\Om_a}}   \, \int _ {B_{\ell, \, \eta}}\, \nabla v ^ {[m]} (x)  \eta\, dx
%\sum _{m= 1} ^{| \eta_1\, \eta_2\, \eta_3 |} \, \int_{\Omega }  {\nabla}  v^ {[m]}(x)\eta\, dx\\\
 \end{split}\ee 
where $v ^ {[m]}  $   is a \emph{piecewise linear  continuous} function on
a type  A  decomposition of the bond volume $B_{\ell, \, \eta} $ into tetrahedra.  The superscript
$m$ indicates  the covering  $\mathcal{S} ^m_{B_\eta}$ to which  $B_{\ell, \, \eta}$ belongs. In fact,  $v ^ {[m]}  $ can be defined globally as
follows: For a given lattice function  $\{v_\ell\},$ a fixed  $m$ and a  covering  $\mathcal{S} ^m_{B_\eta}$  $v ^ {[m]}  $ is equal to
\begin{enumerate} 
\item[-] the  {piecewise linear} interpolant of $\{v_\ell\}$ on
a type  A  decomposition of the bond volume $B_{\ell, \, \eta} $ into tetrahedra  if  $\overline B_{\ell, \, \eta} \subset\Om_a,$
\item[-]  $v^{\ell, \eta},$ for  $B_{\ell, \, \eta}\cap\Gamma\neq\emptyset, $ where the piecewise polynomial $v^{\ell, \eta}$ on   $B_{\ell, \, \eta}$ is defined through (i--iv) below \eqref{overline_y}, 
\item[-]  the   piecewise linear function interpolating    $\{v_\ell\}$   {at  lattice tetrahedra   $T  \subset B_{\ell, \, \eta} \subset\Om_*.$}
\end{enumerate} 
It is clear by construction that each $v ^ {[m]}  \in H^1 (\Omega)\, .$ Further, each tetrahedron corresponds to exactly one atomistic cell $K   \subset\Om_* $ belonging to $|\eta_1 \eta_2 \eta_3|$ different 
bond volumes $B_{\ell, \, \eta}$,  each one belonging to a different covering $\mathcal{S} ^m_{B_\eta}.$ Thus for $T  \subset\Om_* $  we have  
\be\label{prop2-2}
\int_{T}{\nabla}  \overline v(x)\eta\, dx = \frac 1 {| \eta_1\, \eta_2\, \eta_3 | }   \sum _{m= 1} ^{| \eta_1\, \eta_2\, \eta_3 |}  \int _ {T\cap B_{\ell, \, \eta} \in \mathcal{S} ^m_{B_\eta}} \nabla  v^{[m]}(x)\eta\, dx\, .
\ee
Therefore, 
\be \begin{split}
 \int_{\Om_*} {\nabla}  \overline v(x)\eta\, dx= \frac 1 {| \eta_1\, \eta_2\, \eta_3 | }   \sum _{m= 1} ^{| \eta_1\, \eta_2\, \eta_3 |}  \int _ {\Om_*} \nabla  v^{[m]}(x)\eta\, dx \, . \end{split}\ee
 By construction of  $v^{\ell, \eta} $ and $v ^ {[m]}  $ we have
 \be\label{vmGamma}
 \sum_{
    \substack{\ell \in \Lat \\B_{\ell, \, \eta}\in  B_\Gamma } }\frac 1 {| \eta_1\, \eta_2\, \eta_3 |} \int_{B_{\ell, \, \eta } }\chi_{{}_{\Om_a}} \nabla v^{\ell, \eta} \eta\,  dx
 =\frac 1 {| \eta_1\, \eta_2\, \eta_3 | }   \sum _{m= 1} ^{| \eta_1\, \eta_2\, \eta_3 |}  \  \sum _{\substack{B_{\ell, \, \eta} \in \mathcal{S} ^m_{B_\eta} \\B_{\ell, \, \eta}\in  B_\Gamma } }\ 
 \int_{B_{\ell, \, \eta } }\chi_{{}_{\Om_a}}
 \nabla  v^{[m]}(x)\eta\, dx
 \ee
Thus rewriting \eqref{prop2-1} as 
\be \begin{split}\label{prop2-3}
  \varepsilon ^3  \sum_{\substack{\ell\in \Lat \\ \overline B_{\ell, \, \eta} \subset\Om_a}}\,  \overline D_\eta v_\ell 
=\frac 1 {| \eta_1\, \eta_2\, \eta_3 | }   \sum _{m= 1} ^{| \eta_1\, \eta_2\, \eta_3 |}  \  \sum _{\substack{B_{\ell, \, \eta} \in \mathcal{S} ^m_{B_\eta} \\ \overline B_{\ell, \, \eta} \subset\Om_a } }\ 
 \int_{B_{\ell, \, \eta } } 
 \nabla  v^{[m]}(x)\eta\, dx, %\sum _{m= 1} ^{| \eta_1\, \eta_2\, \eta_3 |} \, \int_{\Omega }  {\nabla}  v^ {[m]}(x)\eta\, dx\\\
 \end{split}\ee 
we finally obtain
\be
\begin{split}\label{vmGamma-2}
\varepsilon ^3  \sum_{\substack{\ell\in \Lat \\ \overline B_{\ell, \, \eta} \subset\Om_a}}\,  \overline D_\eta v_\ell  &+ \sum_{
    \substack{\ell \in \Lat \\ B_{\ell, \, \eta}\in  B_\Gamma } }\frac 1 {| \eta_1\, \eta_2\, \eta_3 |} \int_{B_{\ell, \, \eta } }\chi_{{}_{\Om_a}} \nabla v^{\ell, \eta} \eta\,  dx\\
& =\frac 1 {| \eta_1\, \eta_2\, \eta_3 | }   \sum _{m= 1} ^{| \eta_1\, \eta_2\, \eta_3 |}  \  \sum _{\substack{B_{\ell, \, \eta} \in \mathcal{S} ^m_{B_\eta} \\ B_{\ell, \, \eta}\in  B_\Gamma   \  \text{ or } \ \overline B_{\ell, \, \eta} \subset\Om_a }  }\ 
 \int_{B_{\ell, \, \eta } }\chi_{{}_{\Om_a}}
 \nabla  v^{[m]}(x)\eta\, dx\\
& =\frac 1 {| \eta_1\, \eta_2\, \eta_3 | }   \sum _{m= 1} ^{| \eta_1\, \eta_2\, \eta_3 |}  \    \int_{ \Om_a}
 \nabla  v^{[m]}(x)\eta\, dx \, . 
 \end{split}
 \ee
Hence \eqref{idea} follows in view of \eqref{prop2-2}. Therefore the proof of proposition is complete in view of the Gauss-Green theorem.
\qed 

\section{The discontinuous   bond volume based coupling method}  

In this section we show that is is possible to modify energies to allow underlying functions
which might be discontinuous at the interface.
This allows greater flexibility on the construction of the underlying meshes and thus the computation
of the energy at the interface might become simpler. To retain consistency the interfacial 
energies should include terms accounting for the possible 
discontinuity of the underlying functions. 
There are  many alternatives, such as the possibility of adding extra stabilization terms, compare to 
\cite{BallMoral09}. The purpose of this paper is however to present the general
framework and we will not insist on the various modifications and extensions of the methods developed herein.

Let $\Om$, $\Om_a,$ $\Om_*$  and $\GG$  be   as in the previous section.   
 Further,   we distinguish the same  cases  a), b) and  c) regarding the 
location of each bond volume $B_{\ell, \, \eta} .$
The corresponding energies are still defined by 
 \be E_{\Om_a, \eta }^a\{y\}= \varepsilon ^3 \sum_{\substack{\ell\in \Lat \\\overline B_{\ell, \, \eta} \subset\Om_a}}\ph(\overline D_\eta y_\ell ) ,
   \ee
and
\be  \label {overline_y:DG}   E_{\Om_*, \eta }^{a,cb}\{y\}
=
\frac {\varepsilon  ^ 3} 6    \,   \sum _{\ell \in \Lat, \ T \in K_\ell(T), \ T \subset \Om_* }    \  \phi _\eta \, (\widetilde {\nabla} y\,  \eta )=\int_{\Om_*}\ph({\nabla}  \overline y(x)\eta)dx\, , 
      \ee
$\overline y$ being the piecewise linear function at the lattice tetrahedra  interpolating $\{y_\ell\}$.     
The main difference to the previous construction in Section 3
is the choice of  $y^{\ell, \eta}$ and the corresponding energies for each bond volume intersecting  the interface. 
In fact we let    
\begin{enumerate} 
\item[i)] $y^{\ell, \eta}\in C (\overline {B_{\ell, \, \eta}}\  \backslash \GG ) .$
\item[ii)] Further, let $\T (B_{\ell, \, \eta}) $ be  a decomposition %, cf. Figure \ref{Fig:2},   
of  $ B_{\ell, \, \eta}$ with the properties a) if  ${T} \in \T (B_{\ell, \, \eta})$ and 
$ {T} \subset \Om_*$ then $ {T}$  is an atomistic tetrahedron resulting from a type A decomposition of an atomistic cell. 
 b) If   $ {T} \in \T (B_{\ell, \, \eta})$ and ${T} \subset \Om_a$ then 
 $ {T}$ is a  {lattice tetrahedron}. %In this case $y^{\ell, \eta} \in \mathbb{P} _1 (K).$ 
\item[iii)] In the  case ii.b)   above  if $ {T}$ has a face on   $\partial \big ( B_{\ell, \, \eta}\cap \Om_a \big ) \backslash \Gamma$ then it will allow for a compatible conforming
decomposition with respect to attached  bond volumes. In that case if the attached bond volume 
is included in $\Om_a$ it is assumed to be  type-A decomposed into tetrahedra. 
 
   \item[iv)] For $ {T} \in \T (B_{\ell, \, \eta}),$ $y^{\ell, \eta} \in \mathbb{P} _1 ( {T}), $ interpolating $\{y_\ell \}\,$ at the vertices of $ {T}$.
\end{enumerate} 
We have kept the same properties, but we allow discontinuous matching across the interface $\GG. $ This provides greater flexibility on the construction of $y^{\ell, \eta}$ since  {it allows the presence of } arbitrary
hanging nodes on the interface of the two regions.  

We then define  the energy due to bond volumes intersecting  the interface  as
\begin{equation}
 E_{\GG, \eta }^{D}\{y\}   = \sum_{
    \substack{\ell \in \Lat \\ B_{\ell, \, \eta}\in  B_\Gamma} }\frac 1 {|\eta_ 1 \eta_ 2 \eta_3|} \Big [ \int_{B_{\ell, \, \eta } }\chi_{{}_{\Om_a}}\ph(\nabla y^{\ell, \eta}\eta)\,  dx  
    - \int _{B_{\ell, \, \eta } \cap \GG } \ph '(\Av{\nabla y^{\ell, \eta}\eta})\,  {\cdot} \J {y^{\ell, \eta}\eta }\,  dS \, \Big ] .
   \end{equation}
 Here,  {$\J {w\eta}$}, $\Av w$ denote the jump and the average of a possibly discontinuous function 
 on the interface
 \be\label{Jw}
  {\J {w\eta}  : = (\nu _{\Omega _a}\cdot\eta)\, w^-\,  + (\nu _{\Omega _*}\cdot\eta)\, w^+ } , \qquad 
  \Av w  : = \frac 1 2 \{  w^-\,  + w^+ \} \, ,
 \ee 
 where $ w^-$ and $ w^+$ being the limits taken from $\Omega _a$ and
$\Omega _*$ respectively, and  $\nu _{\Omega _a}$,  $ \nu _{\Omega _*}$ the corresponding 
exterior normal unit vectors, with  $\nu _{\Omega _a}=- \nu _{\Omega _*}$ on $\GG$. 

A key observation here is that $E_{\GG, \eta }^{D}\{y\} $ \emph{does not induce inconsistencies} on the energy level. In fact, it is obvious that if  $y^{\ell, \eta} \in C (\overline {B_{\ell, \, \eta}}  )$
as in the previous section, then  
\be\label{Jw}
  E_{\GG, \eta }^{D}\{y\}   = E_{\GG, \eta }^{ }\{y\} \, , 
 \ee   
since the extra term on the interface vanishes. 
Then, as in the previous section, we define the total energy as follows:
\be  \label{E_bv:DG}  \mathcal{E} _{bv} ^D \{y\} = \sum _{\eta \in R}  \mathcal{E} ^D _{\, \eta}\{y\} ,
\ee
where 
 \be
  \mathcal{E} ^D_{\, \eta}\{y\}=  E_{\Om_a, \eta }^a\{y\} +  E_{\Om_*, \eta }^{a,cb}\{y\} + E _{\GG, \eta }^{D}\{y\}\, .
   \ee
Despite the fact that we allow discontinuities, the energy $\mathcal{E} _{bv} ^D$   is still  ghost-force free:
\begin{proposition}\label{bv_gffree1:DG} The energy \eqref{E_bv:DG} is free of ghost forces, in the sense that 
\be
 \< D \mathcal{E} _{bv} ^D (y_F), v\>=  0, \quad y_F (x) = \F x , 
\ee
for all  { $ v\in \V $.
   }
\end{proposition}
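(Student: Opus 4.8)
The plan is to mirror the proof of Proposition \ref{bv_gffree1}, reducing the claim to the vanishing of a single bracketed expression, and then to verify that the only new ingredient --- the discontinuity of the underlying functions across $\GG$ --- is cancelled exactly by the extra interface term in $E_{\GG,\eta}^{D}$. First I would linearize $\mathcal{E}_{bv}^{D}$ at the homogeneous deformation $y_F(x)=\F x$ in a direction $v\in\V$. As in Section 4, the scalar $\ph'(\F\eta)$ factors out of every contribution, and the volume parts produce verbatim the same expressions as before: the atomistic sum $\varepsilon^{3}\sum_{\overline B_{\ell,\eta}\subset\Om_a}\overline D_\eta v_\ell$, the A--CB integral $\int_{\Om_*}\nabla\overline v\,\eta\,dx$, and the interfacial volume piece $\sum_{B_\Gamma}\frac{1}{|\eta_1\eta_2\eta_3|}\int_{B_{\ell,\eta}}\chi_{{}_{\Om_a}}\nabla v^{\ell,\eta}\eta\,dx$.

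The genuinely new computation is the linearization of the jump correction $-\ph'(\Av{\nabla y^{\ell,\eta}\eta})\cdot\J{y^{\ell,\eta}\eta}$. Its directional derivative at $y_F$ splits into a term carrying $\ph''$ multiplied by $\J{y^{\ell,\eta}\eta}$ and a term $-\ph'(\Av{\nabla y^{\ell,\eta}\eta})\cdot\J{v^{\ell,\eta}\eta}$. Since for homogeneous $y_F$ the interpolant $y_F^{\ell,\eta}$ is the \emph{exact} continuous affine field $\F x$, its traces from the two sides of $\GG$ coincide, and using $\nu_{\Om_a}=-\nu_{\Om_*}$ in the definition \eqref{Jw} one gets $\J{y_F^{\ell,\eta}\eta}=0$; this annihilates the $\ph''$ term, while $\Av{\nabla y_F^{\ell,\eta}\eta}=\F\eta$. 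After factoring $\ph'(\F\eta)$ it thus remains to show that
\[
\varepsilon^{3}\!\!\sum_{\overline B_{\ell,\eta}\subset\Om_a}\!\!\overline D_\eta v_\ell
+\int_{\Om_*}\!\nabla\overline v\,\eta\,dx
+\sum_{B_\Gamma}\frac{1}{|\eta_1\eta_2\eta_3|}\!\int_{B_{\ell,\eta}}\!\chi_{{}_{\Om_a}}\nabla v^{\ell,\eta}\eta\,dx
-\sum_{B_\Gamma}\frac{1}{|\eta_1\eta_2\eta_3|}\!\int_{B_{\ell,\eta}\cap\GG}\!\J{v^{\ell,\eta}\eta}\,dS=0.
\]

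Here I would reuse the covering machinery of Proposition \ref{bv_gffree1} without change: via Lemma \ref{bv_lemma} and the splitting $\sum_{\ell\in\Lat}=\sum_{m=1}^{|\eta_1\eta_2\eta_3|}\sum_{B_{\ell,\eta}\in\mathcal S^m_{B_\eta}}$, the three volume terms collapse exactly as in \eqref{vmGamma-2} to $\frac{1}{|\eta_1\eta_2\eta_3|}\sum_{m}\big[\int_{\Om_a}+\int_{\Om_*}\big]\nabla v^{[m]}(x)\eta\,dx$, with the globally defined $v^{[m]}$ assembled as in the proof of Proposition \ref{bv_gffree1}. The one place where the argument must depart from the continuous case is the closing application of the divergence theorem: now each $v^{[m]}$ is only \emph{piecewise} $H^{1}$, smooth inside $\Om_a$ and inside $\Om_*$ but jumping across $\GG$. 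Applying Gauss--Green separately on $\Om_a$ and on $\Om_*$, the contributions on $\partial\Omega$ cancel by periodicity (recall $\overline\Om_a$ is strictly interior), while the two traces on $\GG$ recombine, through $\nu_{\Om_a}=-\nu_{\Om_*}$ and \eqref{Jw}, into $\int_{\GG}\J{v^{[m]}\eta}\,dS$. Hence the volume part equals $\frac{1}{|\eta_1\eta_2\eta_3|}\sum_{m}\int_{\GG}\J{v^{[m]}\eta}\,dS$.

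It then remains to recognize this as precisely the linearized jump correction. Since each interface-crossing bond volume lies in exactly one covering $\mathcal S^m_{B_\eta}$, on which $v^{\ell,\eta}=v^{[m]}$, and since within a single covering the bond volumes in $B_\Gamma$ tile $\GG$, the same counting yields $\sum_{B_\Gamma}\frac{1}{|\eta_1\eta_2\eta_3|}\int_{B_{\ell,\eta}\cap\GG}\J{v^{\ell,\eta}\eta}\,dS=\frac{1}{|\eta_1\eta_2\eta_3|}\sum_{m}\int_{\GG}\J{v^{[m]}\eta}\,dS$, so the last two terms cancel and the bracket vanishes. I expect the main obstacle to be the interface bookkeeping: confirming rigorously that $\J{y_F^{\ell,\eta}\eta}=0$ for \emph{every} bond volume in $B_\Gamma$ (including those carrying the hanging nodes permitted by i)--iv)), so that the $\ph''$ term drops uniformly, and that the covering--counting tiles $\GG$ consistently, so that the Gauss--Green interface integral matches the discrete correction bond volume by bond volume. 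Once these are in place, the conclusion follows from the piecewise Gauss--Green identity and summation over $\eta\in R$.
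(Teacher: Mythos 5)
Your proposal is correct and follows essentially the same route as the paper: linearize at $y_F$, observe that $\J{y_F^{\ell,\eta}\eta}=0$ kills the $\ph''$ term and leaves $\ph'(\F\eta)\cdot\J{v^{\ell,\eta}\eta}$, reuse the coverings $\mathcal S^m_{B_\eta}$ and the globally assembled $v^{[m]}\in H^1(\Omega\setminus\GG)$ to collapse the volume terms, and close with Gauss--Green on $\Omega\setminus\GG$ so that the interface integral $\int_\GG\J{v^{[m]}\eta}\,dS$ cancels against the discrete jump correction. The paper packages this as the single identity \eqref{idea:DG} followed by one application of Gauss--Green, but the bookkeeping is identical to yours.
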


\begin{proof}
The structure of the proof is the same to that of Proposition \ref{bv_gffree1}, hence  we present 
in detail only the main differences. 
We still need the coverings  $\mathcal{S} ^m_{B_\eta} $ and recall that their elements   
{define} a decomposition of
  non-overlapping bond volumes.      
 {As in the} proof of Proposition \ref{bv_gffree1}
for a given lattice function $\{v_\ell\}$  we define  the functions ${\nabla}  \overline v$ and $ v^{\ell, \eta} $   in analogy  with $ {\nabla}  \overline y$ and $ y^{\ell, \eta}, $ cf.,  \eqref{overline_y}. 
%Recall that $\overline D_\eta v_\ell =\dint_{b_\ell }  \nabla\hat v \eta$.  
Then,  we have
\be \begin{split}
\< D\mathcal{E} ^D  _{\, \eta} (y_F), v\>=  
%=\< DE_\{\Om_a, \eta }^a\{y_F\} + D E_{\Om_*, \eta }^{a,cb}\{y_F\} + DE_{\GG, \eta }^{}\{y\} (y_F), v\>=
\ph'(F\,\eta)\cdot
\Big \{ &\varepsilon ^3 \sum_{\substack{\ell\in \Lat \\\overline B_{\ell, \, \eta} \subset\Om_a}}\,  \overline D_\eta v_\ell 
+\int_{\Om_*}{\nabla}  \overline v(x)\eta\, dx\\\
&+ \frac 1 {| \eta_1\, \eta_2\, \eta_3 |} \sum_{
    \substack{\ell \in \Lat \\ B_{\ell, \, \eta}\in  B_\Gamma } }\int_{B_{\ell, \, \eta } }\chi_{{}_{\Om_a}} \nabla v^{\ell, \eta} \eta\,  dx -  \int _{B_{\ell, \, \eta } \cap \GG }   \J {v^{\ell, \eta}\eta }\,  dS \,  
\Big \}\, .
 \end{split}\ee
 Indeed, to show this it suffices to evaluate 
\be
\< D \big [   \ph '(\Av{\nabla w\eta} ) {\cdot}  \J {w \eta }  \, \big ], v\>=
   {\Bigl(}
  \ph ''(\Av{\nabla w \eta} )\, \J {w \eta}  {\Bigl) \cdot}
   \Av{\nabla v\eta}   
+  \ph '(\Av{\nabla w\eta } ) {\cdot} \J {v \eta} ,
  \ee
which  is equal to  $  {\ph '(F\eta )\cdot \J {v \eta } }$  for $w= y_F$.

%\textcolor{blue}{ Calculation of the derivative above (with a bit simplified notation):
%Let $\Psi (w)=  \ph '(\Av{\nabla w\eta} ) {\cdot}  \J {w \eta }  $.  Then 
%\be
%\begin{split}
%\< D \Psi (w) , v\>=& \frac {d} {d\delta} \Big | _{\delta =0}
%   \ph '(\Av{\nabla w \eta+\delta \nabla  v \eta} ) {\cdot} 
%   \J { {(}w + \delta v  {)}\eta }  \\
%   = & \Big [  {\Bigl(}
%   \ph ''(\Av{\nabla w \eta +\delta \nabla  v \eta} )\,\Av{\nabla  v \eta}  {\Bigl) \cdot} 
%   \J { {(}w + \delta v  {)}\eta } 
%   +  \ph '(\Av{\nabla w  \eta+\delta \nabla  v \eta} ) {\cdot} \J { v \eta }   \Big ] _{\delta =0}\\
%   = & 
%    {\Bigl(}\ph ''(\Av{\nabla w   \eta} )\,\Av{\nabla  v \eta}  {\Bigl) \cdot}  \J {w   \eta }  
%   +  \ph '(\Av{\nabla w \eta} ) {\cdot} \J { v \eta }   \\
%   \end{split}
%  \ee
%}

In parallel to the proof of Proposition \ref{bv_gffree1} we shall prove
  \be \label{idea:DG}\begin{split}
  \varepsilon ^3  \!\! \! \sum_{\substack{\ell\in \Lat \\\overline B_{\ell, \, \eta} \subset\Om_a}}\,  \overline D_\eta v_\ell 
&+\int_{\Om_*}\overline{\nabla}  v(x)\eta\, dx\
+ \!\!\!\! \sum_{
    \substack{\ell \in \Lat \\ B_{\ell, \, \eta}\in  B_\Gamma } }\frac 1 {| \eta_1\, \eta_2\, \eta_3 |} \, \Big [\int_{B_{\ell, \, \eta } }\chi_{{}_{\Om_a}} \nabla v^{\ell, \eta} \eta\,  dx-  \int _{B_{\ell, \, \eta } \cap \GG }   \J {v^{\ell, \eta}\eta }\,  dS \,\Big ]  \\
&=\frac 1 {| \eta_1\, \eta_2\, \eta_3 | } \sum _{m= 1} ^{| \eta_1\, \eta_2\, \eta_3 |} \,\ \Big [ \int_{\Omega \backslash \GG }  {\nabla}  v^ {[m]}(x)\eta\, dx   -  \int _{ \GG }   \J {v^ {[m]} \eta }\,  dS \, \Big ]\\\
 \end{split}\ee
 where $v^ {[m]}, $  $ m =1, \dots, | \eta_1\, \eta_2\, \eta_3 |$,  are appropriate  functions in $H^1(\Omega\backslash \GG)\cap C (\overline \Omega \backslash \GG) ,$ possibly discontinuous at $\GG;$ each $v^ {[m]} $ is  associated to a different  covering $\mathcal{S} ^m_{B_\eta} $
 consisting of bond volumes. Relation \eqref{idea:DG}  then implies  $\< D\mathcal{E} ^D  _{\, \eta} (y_F), v\>=  0$ since by the Gauss Green theorem, 
 $$
 \int_{\Omega \backslash \GG }  {\nabla}  v^ {[m]}(x)\eta\, dx   =  \int _{ \GG }   \J {v^ {[m]} \eta }\,  dS\, .
 $$
 It remains therefore to establish \eqref{idea:DG}. To this end, we proceed exactly 
 as in the proof of Proposition \ref{bv_gffree1}. In particular, 
 for a given lattice function  $\{v_\ell\}$, a fixed  $m$ and a  covering  $\mathcal{S} ^m_{B_\eta}$ define $v ^ {[m]}  $ as
\begin{enumerate} 
\item[-] the  {piecewise linear} interpolant of $\{v_\ell\}$ on
a type  A  decomposition of the bond volume $B_{\ell, \, \eta} $ into tetrahedra  if  $\overline B_{\ell, \, \eta} \subset\Om_a,$
\item[-]  $v^{\ell, \eta},$ for  $B_{\ell, \, \eta}\cap\Gamma\neq\emptyset, $ where the piecewise polynomial on $B_{\ell, \, \eta},$  $v^{\ell, \eta}$  is possibly discontinuous on  $B_{\ell, \, \eta}\cap\Gamma,$   and  is defined through (i--iv) above, 
\item[-]  the piecewise linear function at the lattice tetrahedra  interpolating    $\{v_\ell\}$, if $T$  is an atomistic tetrahedron  such that   $ {T} \subset B_{\ell, \, \eta} \subset\Om_*$.
\end{enumerate} 
Now,  by construction  $v ^ {[m]}  \in H^1(\Omega\backslash \GG)\cap C (\overline \Omega \backslash \GG)$, and is possibly discontinuous at $\GG$.
The rest of the proof is identical to the one of  Proposition \ref{bv_gffree1} with the exception that 
\eqref{vmGamma} should be replaced by  
 \be
 \label{vmGamma:DG}
 \begin{split}
 \sum_{
    \substack{\ell \in \Lat \\B_{\ell, \, \eta}\in  B_\Gamma } }\frac 1 {| \eta_1\, \eta_2\, \eta_3 |} &\int_{B_{\ell, \, \eta } }\chi_{{}_{\Om_a}} \nabla v^{\ell, \eta} \eta\,  dx  -  \int _{B_{\ell, \, \eta } \cap \GG }   \J {v^{\ell, \eta}\eta }\,  dS\\
& =\frac 1 {| \eta_1\, \eta_2\, \eta_3 | }   \sum _{m= 1} ^{| \eta_1\, \eta_2\, \eta_3 |}  \  \sum _{\substack{B_{\ell, \, \eta} \in \mathcal{S} ^m_{B_\eta} \\B_{\ell, \, \eta}\in  B_\Gamma } }\ 
 \int_{B_{\ell, \, \eta } }\chi_{{}_{\Om_a}} 
 \nabla  v^{[m]}(x)\eta\, dx    -  \int _{B_{\ell, \, \eta } \cap \GG }   \J {v^{[m]}\eta }\,  dS,
  \end{split}
 \ee
 with \eqref{vmGamma-2} modified accordingly.  
\end{proof}

\section{High-order finite element coupling} 

In this section we shall see how the previous analysis can lead to energy-based methods which  employ high-order (even $hp$-) finite element approximations of the Cauchy-Born 
energy on the continuum region
while remaining ghost-force free. 
To this end let $\T _{ac}$ be a decomposition  of $\Omega$ into elements with the following properties: Let $\Om$, $\Om_a$ and $\Om_*$ as before with $\GG$   the interface.  The approximations will be based on 
decompositions of the continuum region  $\Om_*$ that are compatible
on  $\GG$ to  $\Wh (\Om_* ) .$ To this end, let 
\be \begin{split} \T_c (\Om_* ) \quad  &\text{be a  conforming decomposition  of } \ \Om_*   \ \text{ into tetrahedra with vertices lattice points,} \\
&\text{such that, if }\  T\in \T_c (\Om_* ), \overline T \cap \GG \neq\emptyset ,\quad \text{then}\quad
T\in \T_T (\Om_* )\, .
\end{split}
\ee
We consider the discrete space
\be \begin{split}
\Vac ( \Om_*) = \{ v \in   C ( \overline \Om_* ) :& \   v| _T \in \mathbb{P}_1 (T)\ \text{for }  \overline T \cap \GG \neq\emptyset  \text{ and }\\
&\  v| _T \in \mathbb{P}_k (T)\ \text{for all other  } T \in  \T_c (\Om_* ) \, \}\, . 
\end{split}\ee
This space can be extended to include the atomistic region as well by  
\be\begin{split}
\Vac = \{ v \in   C ( \overline \Om ) :  &\   v| _T \in \mathbb{P}_1 (T)\ \text{for }   T \in \T_T (\Om_a) \text{ and }\\
& v| _T \in \Vac ( \Om_*) \ \text{for all } T \in  \T_c (\Om_* ), \ \  v \ \text{periodic on } \Omega  \, \}\, . 
\end{split}\ee
For $v\in \Vac $ one can define the corresponding lattice  {function} $\{ v_\ell\}$, simply by interpolating. Conversely,  for given $\{ v_\ell\}$ one can find  $v\in \Vac $  that 
coincides with corresponding values of  $\{ v_\ell\}$   at the vertices. However, at the regions using high-order finite elements the other degrees of freedom should be defined with some care. 
In the following, we assume that we are given a function $y,$ $y (x) = \F  x  + v (x),$ $v \in \Vac $ and we shall define 
its atomistic/continuum energy. To this end, 
\be\label{E_ac}
\mathcal{E} _{\, h} = \sum_{\eta \in R}\  \mathcal{E} _{\, h, \eta}\, , 
\ee
where 
 \be
  \mathcal{E}_{\, h, \eta} \{y\}=  E_{\Om_a, \eta }^a\{y\}  +
   \int_{\Om_*}\ph( {\nabla}  y(x)\eta)dx + E_{\GG, \eta }^{}\{y\} \, .
   \ee
Here for an atomistic point  $x_\ell \in \Om_a, $  $y_\ell= y(x_\ell  ), $
and the local energies   $E_{\Om_a, \eta }^a\{y\}  ,$
  $  E_{\GG, \eta }^{}\{y\}   $ are defined as in Section 4, see
  \eqref{energy:a}, \eqref{energy:GG}.

The above method can designed to be of   arbitrary high order accuracy of the Cauchy-Born
energy at the continuum region $\Omega_* \, .$ Such methods are of importance since, by tuning the discretization parameters
(decomposition of  $\Omega_* \, $ and polynomial degrees) we have the possibility of matching the ideal accuracy at the continuum region
which is $O(\varepsilon ^2).$  
The energy $\mathcal{E} _{\, h}$ is ghost force free. 

\begin{proposition} The energy \eqref{E_ac} is free of ghost forces, in the sense that 
\be
 \< D \mathcal{E} _{\, h} (y_F), v\>=  0, \quad y_F (x) = \F x\, , \ee
for all   $v\in \Vac \, .$
\end{proposition}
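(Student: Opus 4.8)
The plan is to reduce this high-order case to the already-established consistency machinery of Proposition~\ref{bv_gffree1} by observing that the only structural difference is in the $\Om_*$ term. In Section~4 the continuum contribution was $E_{\Om_*, \eta}^{a,cb}\{y\} = \int_{\Om_*}\ph({\nabla}\overline y(x)\eta)\,dx$ with $\overline y$ the \emph{piecewise linear} interpolant on lattice tetrahedra, whereas here we have $\int_{\Om_*}\ph({\nabla}y(x)\eta)\,dx$ with $y\in\Vac$ a genuine high-order function. The atomistic term $E_{\Om_a,\eta}^a\{y\}$ and the interfacial term $E_{\GG,\eta}^{}\{y\}$ are verbatim the same. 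So the first step is to differentiate $\mathcal{E}_{h,\eta}$ at $y_F(x)=\F x$ and factor out $\ph'(F\eta)$, exactly as in Proposition~\ref{bv_gffree1}, obtaining
\be\begin{split}
\< D\mathcal{E}_{h,\eta}(y_F), v\> = \ph'(F\,\eta)\cdot\Big\{
&\varepsilon^3\sum_{\substack{\ell\in\Lat\\\overline B_{\ell,\eta}\subset\Om_a}}\overline D_\eta v_\ell
+\int_{\Om_*}{\nabla}v(x)\eta\,dx\\
&+\sum_{\substack{\ell\in\Lat\\B_{\ell,\eta}\in B_\Gamma}}\frac{1}{|\eta_1\eta_2\eta_3|}\int_{B_{\ell,\eta}}\chi_{{}_{\Om_a}}\nabla v^{\ell,\eta}\eta\,dx\Big\}\, .
\end{split}\ee
Here the $\Om_*$ integrand is the honest gradient ${\nabla}v\eta$ of the high-order function $v\in\Vac$, \emph{not} the interpolant gradient.

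The key observation that makes everything collapse is that for the homogeneous probe this distinction is immaterial at the level of the integral. Since $v^i\in\Vac$ is globally continuous on $\overline\Om$ and periodic, the Gauss--Green theorem gives $\int_{\Om_*}{\nabla}v\,\eta\,dx = \int_{\partial\Om_*}v\,(\nu\cdot\eta)\,ds$, and the boundary $\partial\Om_*$ decomposes into the exterior portion (which cancels by periodicity) and the interface $\GG$. Thus the second step is to rewrite the $\Om_*$ term purely as a boundary integral over $\GG$, so that the polynomial degree inside $\Om_*$ disappears entirely; only the trace of $v$ on $\GG$ survives. Because $v\in\Vac$ is affine on every tetrahedron $T$ with $\overline T\cap\GG\neq\emptyset$ (by the definition of $\T_c(\Om_*)$ and $\Vac$), this trace coincides with the piecewise linear interpolant trace, so it matches exactly the trace produced by the Section~4 construction.

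The third step is to assemble the global conforming functions $v^{[m]}\in H^1(\Omega)$ associated to the coverings $\mathcal{S}^m_{B_\eta}$, precisely as in Proposition~\ref{bv_gffree1}: each $v^{[m]}$ equals the type-A piecewise linear interpolant on bond volumes inside $\Om_a$, equals $v^{\ell,\eta}$ on bond volumes meeting $\GG$, and equals the appropriate linear interpolant on lattice tetrahedra inside $\Om_*$. The crucial compatibility is that the $\Om_*$ boundary trace just extracted agrees with the trace of each $v^{[m]}$ on $\GG$, which holds precisely because $\Vac$ is forced to be $\mathbb{P}_1$ on interface-touching tetrahedra. Applying the counting argument that each interface-adjacent lattice tetrahedron belongs to $|\eta_1\eta_2\eta_3|$ distinct bond volumes, together with relations~\eqref{prop2-2}--\eqref{vmGamma-2} reproduced in the present setting, yields
\be\begin{split}
\varepsilon^3\sum_{\substack{\ell\in\Lat\\\overline B_{\ell,\eta}\subset\Om_a}}\overline D_\eta v_\ell
+\int_{\Om_*}{\nabla}v(x)\eta\,dx
&+\sum_{\substack{\ell\in\Lat\\B_{\ell,\eta}\in B_\Gamma}}\frac{1}{|\eta_1\eta_2\eta_3|}\int_{B_{\ell,\eta}}\chi_{{}_{\Om_a}}\nabla v^{\ell,\eta}\eta\,dx\\
&=\frac{1}{|\eta_1\eta_2\eta_3|}\sum_{m=1}^{|\eta_1\eta_2\eta_3|}\int_\Omega{\nabla}v^{[m]}(x)\eta\,dx=0\, ,
\end{split}\ee
the last equality by Gauss--Green applied to each $v^{[m]}\in H^1(\Omega)$ periodic.

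\textbf{The main obstacle} I anticipate is verifying that the high-order interior of $\Om_*$ does not contaminate the interface matching. One must be careful that the boundary-integral reduction of $\int_{\Om_*}{\nabla}v\,\eta\,dx$ over $\GG$ uses \emph{only} the trace of $v$, and that this trace is affine and lattice-interpolating on each interface tetrahedron — this is exactly what the constraint ``$v|_T\in\mathbb{P}_1(T)$ for $\overline T\cap\GG\neq\emptyset$'' in the definition of $\Vac$ guarantees, and it is the linchpin that lets the Section~4 counting argument apply unchanged. Once this trace-compatibility is in place, no new estimate is needed and the result follows from Proposition~\ref{bv_gffree1}'s scheme together with the Gauss--Green theorem.
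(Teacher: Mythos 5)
Your proposal is correct and follows essentially the same route as the paper: both reduce the statement to the covering/counting machinery of Proposition \ref{bv_gffree1} and both identify the $\mathbb{P}_1$ constraint on interface-touching elements of $\Vac$ as the reason the high-order interior of $\Om_*$ cannot spoil the interface matching. The only (cosmetic) difference is the last step: you replace $\int_{\Om_*}\nabla v\,\eta\,dx$ by $\int_{\Om_*}\nabla v^{[m]}\eta\,dx$ through the common trace on $\GG$ and apply Gauss--Green to each $v^{[m]}$ over $\Omega$, whereas the paper keeps $v$ on $\Om_*$, glues it to the average of the $v^{[m]}$ on $\Om_a$ into a single continuous periodic function $\tilde v\in H^1(\Omega)$, and applies Gauss--Green once to $\tilde v$.
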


\begin{proof} Since  $v\in \Vac$ we have 
\be \begin{split} \< D\mathcal{E} _{\, h, \eta } (y_F), v\>  
=
\ph'(F\,\eta)\cdot
\Big \{ &\varepsilon ^3 \sum_{\substack{\ell\in \Lat \\\overline B_{\ell, \, \eta} \subset\Om_a}}\,  \overline D_\eta v_\ell 
+\int_{\Om_*}{\nabla}   v(x)\eta\, dx\\\
&+ \sum_{
    \substack{\ell \in \Lat \\ B_{\ell, \, \eta}\in  B_\Gamma } }\frac 1 {| \eta_1\, \eta_2\, \eta_3 |} \int_{B_{\ell, \, \eta } }\chi_{{}_{\Om_a}} \nabla v^{\ell, \eta} \eta\,  dx
\Big \}\, .
 \end{split}\ee
 Exactly as in the proof of  of Proposition \ref{bv_gffree1} we may write the first and the third term
 of the the above sum as 
  \be \label{idea:HO}
  \begin{split}
  \varepsilon ^3  \sum_{\substack{\ell\in \Lat \\\overline B_{\ell, \, \eta} \subset\Om_a}}\,  \overline D_\eta v_\ell 
&%+\int_{\Om_*}\overline{\nabla}  v(x)\eta\, dx\
+ \sum_{
    \substack{\ell \in \Lat \\ B_{\ell, \, \eta}\in  B_\Gamma } }\frac 1 {| \eta_1\, \eta_2\, \eta_3 |} \int_{B_{\ell, \, \eta } }\chi_{{}_{\Om_a}} \nabla v^{\ell, \eta} \eta\,  dx\\
&=\frac 1 {| \eta_1\, \eta_2\, \eta_3 | } \sum _{m= 1} ^{| \eta_1\, \eta_2\, \eta_3 |} \, \int_{\Om_a} {\nabla}  v^ {[m]}(x)\eta\, dx\\\
 \end{split}\ee
 where $v^ {[m]}, $  $ m =1, \dots, | \eta_1\, \eta_2\, \eta_3 |$ are  the functions defined in 
 the proof of  of Proposition \ref{bv_gffree1}. Define now,  
\be \label{idea:HO}
  \tilde v (x) =\begin{cases}
   & \frac 1 {| \eta_1\, \eta_2\, \eta_3 | }    \sum _{m= 1} ^{| \eta_1\, \eta_2\, \eta_3 |} \,    v^ {[m]}(x),  \quad \text {for }\  x\in \Om_a\, , \\
   & v (x), \quad \text {for }\  x\in \Om_* \, . 
 \end{cases}\ee
Then tracing back the definition of $\Vac $ at the elements next to the interface $\GG$ and the proof of Proposition \ref{bv_gffree1}, we can show that $\tilde v $ is \emph{continuous at the 
interface} $\GG.$ Thus $\tilde v \in H ^ 1 (\Omega ),$ and  is periodic.  Further,
by the Gauss-Green theorem, 
\be  \< D\mathcal{E} _{\,h,  \eta} (y_F), v\>  =\ph'(F\,\eta)\cdot\left\{ \int_{\Om_a }\nabla  \tilde v\,\eta
+
\int_{\Om_*}  \nabla  v\,\eta\right\}=\ph'(F\,\eta)\cdot  \int_{\Om }\nabla  \tilde v\,\eta=0\, ,
 \ee
and the proof is complete. 
\end{proof}

\textbf{Acknowledgements.} Work partially supported by the FP7-REGPOT project \textit{ACMAC: Archimedes Center for Modeling, Analysis and Computation} of the University of Crete. 

\bibliographystyle{abbrv}
\bibliography{CB_AC_bib}

\address{
\noindent
Charalambos\  Makridakis, Dimitrios\  Mitsoudis,\\
Department of Applied Mathematics, University of Crete, 71409 Heraklion-Crete, Greece; and\\
Institute of Applied and Computational Mathematics, FORTH, 71110 Heraklion-Crete, Greece\\
\email{\tt{makr{\it @\,}tem.uoc.gr}}, \email{\tt{dmits{\it @\,}tem.uoc.gr}}\\
\and \\
 Phoebus Rosakis\\
Department of Applied Mathematics, University of Crete, 71409 Heraklion-Crete, Greece\\
\email{\tt{rosakis{\it @\,}tem.uoc.gr}}

}
\end{document}